% Author : Samuele Giraudo
% Creation : oct. 2016, nov. 2016

\documentclass[submission]{FPSAC2017}

%%%%%%%%%%%%%%%%%%%%%%%%%%%%%%%%%%%%%%%%%%%%%%%%%%%%%%%%%%%%%%%%%%%%%%%%
%%%%%%%%%%%%%%%%%%%%%%%%%%%%%%%%%%%%%%%%%%%%%%%%%%%%%%%%%%%%%%%%%%%%%%%%
%%%%%%%%%%%%%%%%%%%%%%%%%%%%%%%%%%%%%%%%%%%%%%%%%%%%%%%%%%%%%%%%%%%%%%%%
\usepackage[utf8x]{inputenc}
\usepackage[english]{babel}
\usepackage{amsmath,amsfonts,amssymb,amsthm}
\usepackage[T1]{fontenc}
\usepackage{dsfont}
\usepackage{mathrsfs}
\usepackage[dvipsnames]{xcolor}
\usepackage{hyperref}
\usepackage{cite}
\usepackage{multicol}
\usepackage{enumitem}
\usepackage{tikz}
\usetikzlibrary{positioning,fit,shapes.geometric}

%%%%%%%%%%%%%%%%%%%%%%%%%%%%%%%%%%%%%%%%%%%%%%%%%%%%%%%%%%%%%%%%%%%%%%%%
\title{Combalgebraic structures on decorated cliques}
\author{Samuele Giraudo\addressmark{1}%
    \thanks{Email:
    \href{mailto:samuele.giraudo@u-pem.fr}{samuele.giraudo@u-pem.fr}.}}
\address{\addressmark{1} Université Paris-Est,
    LIGM (UMR $8049$), CNRS, ENPC, ESIEE Paris, UPEM, F-$77454$,
    Marne-la-Vallée, France}
\abstract{
    A new hierarchy of combinatorial operads is introduced, involving
    families of regular polygons with configurations of arcs, called
    decorated cliques. This hierarchy contains, among others, operads on
    noncrossing configurations, Motzkin objects, forests, dissections of
    polygons, and involutions. All this is a consequence of the
    definition of a general functorial construction from unitary magmas
    to operads. We study some of its main properties and show that this
    construction includes the operad of bicolored noncrossing
    configurations and the operads of simple and double multi-tildes. We
    focus in more details on a suboperad of noncrossing decorated
    cliques by computing its presentation, its Koszul dual, and showing
    that it is a Koszul operad.}
\keywords{Operad; Koszul duality; Graph; Triangulation; Noncrossing
configuration.}
\received{\today}
%\received{November 14, 2016}
%\received{March 5, 2017}

%%%%%%%%%%%%%%%%%%%%%%%%%%%%%%%%%%%%%%%%%%%%%%%%%%%%%%%%%%%%%%%%%%%%%%%%
\newtheorem{Theorem}{Theorem}[subsection]
\newtheorem{Proposition}[Theorem]{Proposition}

\numberwithin{equation}{subsection}

%%%%%%%%%%%%%%%%%%%%%%%%%%%%%%%%%%%%%%%%%%%%%%%%%%%%%%%%%%%%%%%%%%%%%%%%
%%%%%%%%%%%%%%%%%%%%%%%%%%%%%%%%%%%%%%%%%%%%%%%%%%%%%%%%%%%%%%%%%%%%%%%%
%%%%%%%%%%%%%%%%%%%%%%%%%%%%%%%%%%%%%%%%%%%%%%%%%%%%%%%%%%%%%%%%%%%%%%%%
\renewcommand{\leq}{\leqslant}
\renewcommand{\geq}{\geqslant}

%%%%%%%%%%%%%%%%%%%%%%%%%%%%%%%%%%%%%%%%%%%%%%%%%%%%%%%%%%%%%%%%%%%%%%%%
%%%%%%%%%%%%%%%%%%%%%%%%%%%%%%%%%%%%%%%%%%%%%%%%%%%%%%%%%%%%%%%%%%%%%%%%
%%%%%%%%%%%%%%%%%%%%%%%%%%%%%%%%%%%%%%%%%%%%%%%%%%%%%%%%%%%%%%%%%%%%%%%%
\newcommand{\N}{\mathbb{N}}
\newcommand{\Z}{\mathbb{Z}}
\newcommand{\K}{\mathbb{K}}
\newcommand{\Mca}{\mathcal{M}}
\newcommand{\Oca}{\mathcal{O}}
\newcommand{\Pfr}{\mathfrak{p}}
\newcommand{\Qfr}{\mathfrak{q}}
\newcommand{\Rfr}{\mathfrak{r}}
\newcommand{\Sfr}{\mathfrak{s}}
\newcommand{\Tfr}{\mathfrak{t}}
\newcommand{\Att}{\mathtt{a}}
\newcommand{\Btt}{\mathtt{b}}

\newcommand{\Hsf}{\mathsf{H}}
\newcommand{\Ksf}{\mathsf{K}}
\newcommand{\Dbb}{\mathbb{D}}
\newcommand{\Unit}{\mathds{1}}
\newcommand{\As}{\mathsf{As}}
\newcommand{\BNC}{\mathsf{BNC}}
\newcommand{\MT}{\mathsf{MT}}
\newcommand{\DMT}{\mathsf{DMT}}
\newcommand{\Op}{\star}
\newcommand{\Arcs}{\mathcal{A}}
\newcommand{\Cli}{\mathrm{C}}
\newcommand{\Gen}{\mathfrak{G}}
\newcommand{\Rel}{\mathfrak{R}}
\newcommand{\OrdBE}{\preceq_{\mathrm{be}}}
\newcommand{\OrdD}{\preceq_{\mathrm{d}}}
\newcommand{\Hamming}{\mathrm{ham}}
\newcommand{\Bub}{\mathrm{Bub}}
\newcommand{\Cro}{\mathrm{Cro}}
\newcommand{\Acy}{\mathrm{Acy}}
\newcommand{\Inf}{\mathrm{Nes}}
\newcommand{\Deg}{\mathrm{Deg}}
\newcommand{\Whi}{\mathrm{Whi}}
\newcommand{\Paths}{\mathrm{Pat}}
\newcommand{\Forests}{\mathrm{For}}
\newcommand{\Motzkin}{\mathrm{Mot}}
\newcommand{\Diss}{\mathrm{Dis}}
\newcommand{\Invol}{\mathrm{Inv}}
\newcommand{\Schro}{\mathrm{Sch}}
\newcommand{\Luc}{\mathsf{Luc}}
\newcommand{\NC}{\mathrm{NC}}
\newcommand{\Triangles}{\mathcal{T}}
\newcommand{\Hilbert}{\mathcal{H}}

\newcommand{\Free}{\mathrm{Free}}
\newcommand{\Del}{\mathrm{d}}
\newcommand{\OEIS}[1]{\href{http://oeis.org/#1}{{\bf #1}}}

%%%%%%%%%%%%%%%%%%%%%%%%%%%%%%%%%%%%%%%%%%%%%%%%%%%%%%%%%%%%%%%%%%%%%%%%
%%%%%%%%%%%%%%%%%%%%%%%%%%%%%%%%%%%%%%%%%%%%%%%%%%%%%%%%%%%%%%%%%%%%%%%%
%%%%%%%%%%%%%%%%%%%%%%%%%%%%%%%%%%%%%%%%%%%%%%%%%%%%%%%%%%%%%%%%%%%%%%%%
\tikzstyle{Node}=[circle,draw=RoyalBlue!80,fill=RoyalBlue!8,inner sep=1pt,
minimum size=2.0mm,thick,font=\scriptsize]
\tikzstyle{Edge}=[draw=BrickRed!80,cap=round,thick]
\tikzstyle{Leaf}=[rectangle,draw=Black!70,fill=Black!16,
inner sep=0pt,minimum size=1mm,thick]
\tikzstyle{EdgeLabel}=[regular polygon,regular polygon sides=6,
draw=ForestGreen!80,fill=ForestGreen!2,inner sep=0pt,line width=.3pt,
minimum size=2.3mm,midway,text=Black,font=\tiny]
\tikzstyle{CliqueEdge}=[draw=Cerulean!90,thick]
\tikzstyle{CliqueEmptyEdge}=[draw=Gray!90,thick,densely dashed]
\tikzstyle{CliqueLabel}=[midway,inner sep=1pt,fill=White!0,
font=\tiny]
\tikzstyle{CliquePoint}=[circle,inner sep=0.75pt,fill=BrickRed!25,
draw=BrickRed!70]
\tikzstyle{Injection}=[Black!100,draw,>->]
\tikzstyle{Surjection}=[Black!100,draw,->>]

\newcommand{\UnitClique}{
\begin{tikzpicture}[scale=.6]
    \node[CliquePoint](1)at(0,0){};
    \node[CliquePoint](2)at(.75,0){};
    \draw[CliqueEmptyEdge](1)edge[]node[CliqueLabel]{}(2);
\end{tikzpicture}}

%%%%%%%%%%%%%%%%%%%%%%%%%%%%%%%%%%%%%%%%%%%%%%%%%%%%%%%%%%%%%%%%%%%%%%%%
%%%%%%%%%%%%%%%%%%%%%%%%%%%%%%%%%%%%%%%%%%%%%%%%%%%%%%%%%%%%%%%%%%%%%%%%
%%%%%%%%%%%%%%%%%%%%%%%%%%%%%%%%%%%%%%%%%%%%%%%%%%%%%%%%%%%%%%%%%%%%%%%%
\begin{document}
\maketitle

%%%%%%%%%%%%%%%%%%%%%%%%%%%%%%%%%%%%%%%%%%%%%%%%%%%%%%%%%%%%%%%%%%%%%%%%
%%%%%%%%%%%%%%%%%%%%%%%%%%%%%%%%%%%%%%%%%%%%%%%%%%%%%%%%%%%%%%%%%%%%%%%%
%%%%%%%%%%%%%%%%%%%%%%%%%%%%%%%%%%%%%%%%%%%%%%%%%%%%%%%%%%%%%%%%%%%%%%%%
\section*{Introduction}
Regular polygons endowed with configurations of arcs are very classical
combinatorial objects. Up to some restrictions or enrichments, these
polygons can be put in bijection with several combinatorial families.
Triangulations are the most celebrated among these, but also noncrossing
configurations~\cite{FN99}, dissections of polygons, noncrossing
partitions, or involutions belong also to this world. As many
combinatorial objects, the polygons of most of these families can be
described by composing or grafting smaller pieces together.
Operads~\cite{LV12,Men15} are algebraic structures abstracting the
notion of planar rooted trees and their grafting operations. For this
reason, operads are one of the most suitable modern algebraic structures
to study such objects. In the last years, a lot of combinatorial sets
have been endowed fruitfully with a structure of an operad (see for
instance~\cite{Cha08,LMN13,CG14,GLMN16,Gir16}), each time providing
results about enumeration, discovering new statistics, or establishing
new links (by morphisms) between different combinatorial sets.

The purpose of this work is twofold. First, we are concerned in endowing
the whole set of polygons with configurations of arcs with a structure
of an operad. This leads to see these objects under a new light,
stressing some of their combinatorial and algebraic properties. Second,
we would provide a general construction of operads of polygons rich
enough so that it includes some already known operads. As a consequence,
we obtain alternative constructions of existing operads and new
interpretations of these. We work here with $\Mca$-decorated cliques,
that are complete graphs whose arcs are labeled on $\Mca$, where $\Mca$
is a unitary magma. These objects are natural generalizations of
polygons with configurations of arcs since the arcs of any
$\Mca$-decorated clique labeled by the unit of $\Mca$ are considered as
missing. The elements of $\Mca$ different from the unit allow moreover
to handle polygons with arcs of different colors. We propose a functor
$\Cli$ from the category of unitary magmas to the category of operads.
It builds, from any unitary magma $\Mca$, an operad $\Cli\Mca$ on
$\Mca$-decorated cliques.

This operad has a lot combinatorial and algebraic properties. First,
$\Cli\Mca$ admits as quotients of operads several structures on
particular families of polygons with configurations of arcs. We can for
instance control the degrees of the vertices, the crossings, or the
nestings between the arcs to obtain new operads. We hence get quotients
of $\Cli\Mca$ involving, among others, Schröder trees, dissections of
polygons, Motzkin objects, forests, with colored versions for each of
these. This leads to a new hierarchy of operads, wherein links between
its elements appear as surjective or injective morphisms of operads (see
Figure~\ref{fig:diagram_operads}).
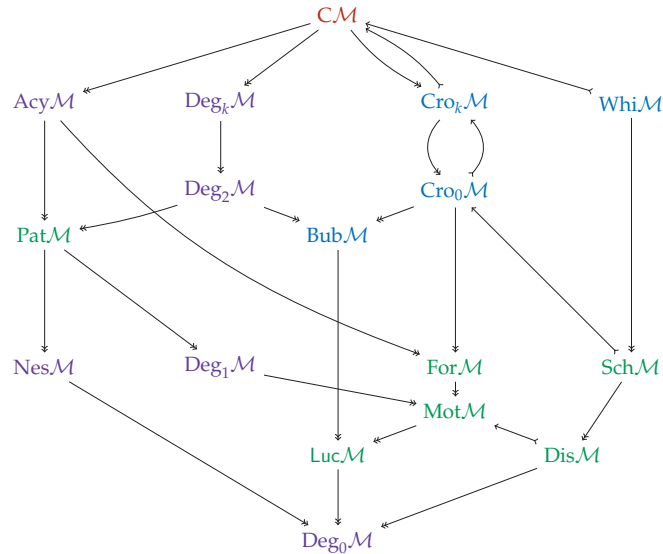
\begin{figure}[ht]
    \centering
    \scalebox{.65}{
    \begin{tikzpicture}[xscale=1.2,yscale=0.9]
        \node[text=BrickRed](CliM)at(8,10)
            {\begin{math}\Cli\Mca\end{math}};
        \node[text=RoyalBlue](CrokM)at(10,8)
            {\begin{math}\Cro_k\Mca\end{math}};
        \node[text=RoyalBlue](Cro0M)at(10,6)
            {\begin{math}\Cro_0\Mca\end{math}};
        \node[text=RoyalPurple](DegkM)at(6,8)
            {\begin{math}\Deg_k\Mca\end{math}};
        \node[text=RoyalPurple](Deg2M)at(6,6)
            {\begin{math}\Deg_2\Mca\end{math}};
        \node[text=RoyalPurple](Deg1M)at(6,2)
            {\begin{math}\Deg_1\Mca\end{math}};
        \node[text=RoyalPurple](Deg0M)at(8,-2)
            {\begin{math}\Deg_0\Mca\end{math}};
        \node[text=RoyalPurple](FinM)at(3,2)
            {\begin{math}\Inf\Mca\end{math}};
        \node[text=RoyalPurple](AcyM)at(3,8)
            {\begin{math}\Acy\Mca\end{math}};
        \node[text=RoyalBlue](BubM)at(8,5)
            {\begin{math}\Bub\Mca\end{math}};
        \node[text=RoyalBlue](WhiM)at(13,8)
            {\begin{math}\Whi\Mca\end{math}};
        \node[text=ForestGreen](PathsM)at(3,5)
            {\begin{math}\Paths\Mca\end{math}};
        \node[text=ForestGreen](LucM)at(8,0)
            {\begin{math}\Luc\Mca\end{math}};
        \node[text=ForestGreen](ForestsM)at(10,2)
            {\begin{math}\Forests\Mca\end{math}};
        \node[text=ForestGreen](SchroM)at(13,2)
            {\begin{math}\Schro\Mca\end{math}};
        \node[text=ForestGreen](MotzkinM)at(10,1)
            {\begin{math}\Motzkin\Mca\end{math}};
        \node[text=ForestGreen](DissM)at(12,0)
            {\begin{math}\Diss\Mca\end{math}};
        \draw[Surjection](CliM)to[bend right=12](CrokM);
        \draw[Injection](CrokM)to[bend right=12](CliM);
        \draw[Injection](WhiM)--(CliM);
        \draw[Surjection](CliM)--(DegkM);
        \draw[Surjection](CliM)--(AcyM);
        \draw[Surjection](AcyM)to[bend right=15](ForestsM);
        \draw[Surjection](AcyM)--(PathsM);
        \draw[Surjection](DegkM)--(Deg2M);
        \draw[Surjection](CrokM)to[bend right=35](Cro0M);
        \draw[Injection](Cro0M)to[bend right=35](CrokM);
        \draw[Surjection](WhiM)--(SchroM);
        \draw[Surjection](Deg2M)to[bend left=5](PathsM);
        \draw[Surjection](Deg2M)--(BubM);
        \draw[Surjection](Cro0M)--(BubM);
        \draw[Surjection](Cro0M)--(ForestsM);
        \draw[Injection](SchroM)--(Cro0M);
        \draw[Surjection](PathsM)--(FinM);
        \draw[Surjection](PathsM)--(Deg1M);
        \draw[Surjection](BubM)--(LucM);
        \draw[Surjection](Deg1M)--(MotzkinM);
        \draw[Surjection](FinM)--(Deg0M);
        \draw[Surjection](ForestsM)--(MotzkinM);
        \draw[Surjection](MotzkinM)--(LucM);
        \draw[Injection](DissM)--(MotzkinM);
        \draw[Surjection](SchroM)--(DissM);
        \draw[Surjection](LucM)--(Deg0M);
        \draw[Surjection](DissM)--(Deg0M);
    \end{tikzpicture}}
    \vspace{-.5em}
    \caption{A diagram of suboperads and quotients of $\Cli\Mca$.
    Arrows~$\rightarrowtail$ (resp.~$\twoheadrightarrow$) are
    injective (resp. surjective) morphisms of operads. Here, $\Mca$ is
    a unitary magma without nontrival unit divisors.}
    \label{fig:diagram_operads}
\end{figure}
One of the most
notable of these is built by considering the $\Mca$-decorated cliques
that have vertices of degrees at most $1$, leading to a quotient
$\Invol\Mca$ of $\Cli\Mca$ involving standard Young tableaux (or
equivalently, involutions). To the best of our  knowledge, $\Invol\Mca$
is the first nontrivial operad on these objects. Besides, the
construction $\Cli$ allows to retrieve the operad $\BNC$ of bicolored
noncrossing configurations~\cite{CG14} and the operads $\MT$ and $\DMT$
respectively defined in~\cite{LMN13} and~\cite{GLMN16} that involve
multi-tildes and double multi-tildes, operators coming from formal
languages theory~\cite{CCM11}. The suboperad $\NC\Mca$ of $\Cli\Mca$ of
$\Mca$-noncrossing configurations, that are $\Mca$-decorated cliques
without crossing diagonals, admits some nice algebraic properties. It is
first generated by elements of arity two (which is not the case of
$\Cli\Mca$), and its nontrivial relations are concentrated in arity
three. This operad is also a Koszul operad.

This text is organized as follows. The construction $\Cli$ is defined in
Section~\ref{sec:construction_clique} and its first properties are
listed. Among other, we describe the generators of $\Cli\Mca$, its
dimensions, establish that it admits a cyclic operad structure, and
define two alternative bases, the $\Hsf$-basis and the $\Ksf$-basis, by
considering a partial order structure on the set of $\Mca$-decorated
cliques. Section~\ref{sec:quotients} is devoted to define some quotients
of $\Cli\Mca$ and to construct, through $\Cli$, the operads $\BNC$,
$\MT$, and $\DMT$. Finally, we study in more details the operad
$\NC\Mca$ in Section~\ref{sec:operads_noncrossing_configurations}. We
show that this operad is an operad of Schröder trees with labels on
arcs satisfying some conditions. We compute its dimensions, its
presentation, its Koszul dual, and establish the fact that it is a
Koszul operad.

%%%%%%%%%%%%%%%%%%%%%%%%%%%%%%%%%%%%%%%%%%%%%%%%%%%%%%%%%%%%%%%%%%%%%%%%
%%%%%%%%%%%%%%%%%%%%%%%%%%%%%%%%%%%%%%%%%%%%%%%%%%%%%%%%%%%%%%%%%%%%%%%%
\paragraph{Notations and general conventions.}
All the algebraic structures of this article have a field of
characteristic zero $\K$ as ground field. We shall use the classical
notations about operads~\cite{LV12} and more precisely those
of~\cite{Gir16}. Since we consider only nonsymmetric operads, we call
these simply {\em operads}. The sequences of integers cited in the
sequel come from~\cite{Slo}.

%%%%%%%%%%%%%%%%%%%%%%%%%%%%%%%%%%%%%%%%%%%%%%%%%%%%%%%%%%%%%%%%%%%%%%%%
%%%%%%%%%%%%%%%%%%%%%%%%%%%%%%%%%%%%%%%%%%%%%%%%%%%%%%%%%%%%%%%%%%%%%%%%
%%%%%%%%%%%%%%%%%%%%%%%%%%%%%%%%%%%%%%%%%%%%%%%%%%%%%%%%%%%%%%%%%%%%%%%%
\section{Operads of decorated cliques}
\label{sec:construction_clique}

%%%%%%%%%%%%%%%%%%%%%%%%%%%%%%%%%%%%%%%%%%%%%%%%%%%%%%%%%%%%%%%%%%%%%%%%
%%%%%%%%%%%%%%%%%%%%%%%%%%%%%%%%%%%%%%%%%%%%%%%%%%%%%%%%%%%%%%%%%%%%%%%%
\subsection{Unitary magmas and decorated cliques}
A {\em clique} of {\em size} $n \geq 1$ is a complete graph $\Pfr$ on
the set of vertices $[n + 1]$. An {\em arc} of $\Pfr$ is a pair of
integers $(x, y)$ with $1 \leq x < y \leq n + 1$, a {\em diagonal} is an
arc $(x, y)$ different from $(x, x + 1)$ and $(1, n + 1)$, and an
{\em edge} is an arc of the form $(x, x + 1)$ and different from
$(1, n + 1)$. We denote by $\Arcs_\Pfr$ the set of arcs of $\Pfr$. The
{\em $i$-th edge} of $\Pfr$ is the edge $(i, i + 1)$ and the arc
$(1, n + 1)$ is the {\em base} of $\Pfr$. Let $\Mca$ be a unitary magma,
that is a set endowed with a binary operation $\Op$ admitting a left and
right unit $\Unit_\Mca$. An {\em $\Mca$-decorated clique} (or an
{\em $\Mca$-clique} for short) is a clique $\Pfr$ endowed with a map
$\phi_\Pfr : \Arcs_\Pfr \to \Mca$. For convenience, for any arc $(x, y)$
of $\Pfr$, we shall denote by $\Pfr(x, y)$ the value $\phi_\Pfr((x, y))$.
Moreover, we say that the arc $(x, y)$ is {\em labeled} by $\Pfr(x, y)$.
When the arc $(x, y)$ is labeled by an element different from
$\Unit_\Mca$, we say that the arc $(x, y)$ is {\em solid}.

In our graphical representations, we shall stick to the following
drawing conventions for $\Mca$-cliques. First, each $\Mca$-clique is
depicted so that its base is the bottommost segment and vertices are
implicitly numbered from $1$ to $n + 1$ in the clockwise direction.
Second, the label of any arc $(x, y)$ of $\Pfr$ is represented in the
following way. If $(x, y)$ is solid, we represent it by a line
decorated by $\Pfr(x, y)$. If $(x, y)$ is not solid and is an edge or
the base of $\Pfr$, we represent it as a dashed line. In the remaining
case, when $(x, y)$ is a diagonal of $\Pfr$ and is not solid, we do
not draw it.

To explore some examples in this article, we shall consider the additive
unitary magma $\Z$, the cyclic additive unitary magma $\N_\ell$ on
$\Z/_{\ell \Z}$, and the unitary magma $\Dbb_\ell$ on the set
$\{\Unit, 0, a_1, \dots, a_\ell\}$ where $\Unit$ is the unit of
$\Dbb_\ell$, $0$ is absorbing, and $a_i \Op a_j = 0$ for all
$i, j \in [\ell]$. For instance,
\vspace{-1em}
\begin{equation} \begin{split}\end{split}
    \Pfr :=
    \begin{split}
    \begin{tikzpicture}[scale=.65]
        \node[CliquePoint](1)at(-0.43,-0.90){};
        \node[CliquePoint](2)at(-0.97,-0.22){};
        \node[CliquePoint](3)at(-0.78,0.62){};
        \node[CliquePoint](4)at(-0.00,1.00){};
        \node[CliquePoint](5)at(0.78,0.62){};
        \node[CliquePoint](6)at(0.97,-0.22){};
        \node[CliquePoint](7)at(0.43,-0.90){};
        \draw[CliqueEdge](1)edge[]node[CliqueLabel]
            {\begin{math}-1\end{math}}(2);
        \draw[CliqueEdge](1)edge[]node[CliqueLabel]
            {\begin{math}1\end{math}}(5);
        \draw[CliqueEdge](1)edge[]node[CliqueLabel]
            {\begin{math}2\end{math}}(7);
        \draw[CliqueEmptyEdge](2)edge[]node[CliqueLabel]{}(3);
        \draw[CliqueEmptyEdge](3)edge[]node[CliqueLabel]{}(4);
        \draw[CliqueEdge](3)edge[]node[CliqueLabel,near start]
            {\begin{math}3\end{math}}(7);
        \draw[CliqueEmptyEdge](4)edge[]node[CliqueLabel]{}(5);
        \draw[CliqueEdge](5)edge[]node[CliqueLabel]
            {\begin{math}2\end{math}}(6);
        \draw[CliqueEdge](5)edge[]node[CliqueLabel]
            {\begin{math}2\end{math}}(7);
        \draw[CliqueEmptyEdge](6)edge[]node[CliqueLabel]{}(7);
    \end{tikzpicture}
    \end{split}
\end{equation}
is a $\Z$-clique of size $6$ such that, among others, $\Pfr(1, 2) = -1$,
$\Pfr(1, 5) = 1$, $\Pfr(3, 7) = 3$, $\Pfr(5, 7) = 2$, $\Pfr(2, 3) = 0$,
and~$\Pfr(2, 6) = 0$.

%%%%%%%%%%%%%%%%%%%%%%%%%%%%%%%%%%%%%%%%%%%%%%%%%%%%%%%%%%%%%%%%%%%%%%%%
%%%%%%%%%%%%%%%%%%%%%%%%%%%%%%%%%%%%%%%%%%%%%%%%%%%%%%%%%%%%%%%%%%%%%%%%
\subsection{A functor from unitary magmas to operads}
For any unitary magma $\Mca$, we define the vector space
\begin{math}
    \Cli\Mca := \bigoplus_{n \geq 1} \Cli\Mca(n)
\end{math}
where $\Cli\Mca(1)$ is the linear span of the singleton consisting in
the $\Mca$-clique
\begin{math} \label{equ:unit_Cli_M}
    \UnitClique
\end{math}
of size $1$ whose base is labeled by $\Unit_\Mca$, and for any
$n \geq 2$, $\Cli\Mca(n)$ is the linear span of all $\Mca$-cliques of
size $n$. We endow $\Cli\Mca$ with a partial composition map $\circ_i$
defined linearly in the following way. If $\Pfr$ and $\Qfr$ are two
$\Mca$-cliques of respective sizes $n$ and $m$, and $i$ is a valid
integer, $\Pfr \circ_i \Qfr$ is obtained by gluing the base of $\Qfr$
onto the $i$-th edge of $\Pfr$, by relabeling the common arcs between
$\Pfr$ and $\Qfr$, respectively the arcs $(i, i + 1)$ and $(1, m + 1)$,
by $\Pfr(i, i + 1) \Op \Qfr(1, m + 1)$, and by renumbering the vertices
of the clique thus obtained from $1$ to $n + m - 1$ (see
Figure~\ref{fig:composition_Cli_M}).
\begin{figure}[ht]\vspace{-1.5em}
    \centering
    \begin{equation*}
        \begin{split}
        \begin{tikzpicture}[scale=.45]
            \node[shape=coordinate](1)at(-0.50,-0.87){};
            \node[shape=coordinate](2)at(-1.00,-0.00){};
            \node[CliquePoint](3)at(-0.50,0.87){};
            \node[CliquePoint](4)at(0.50,0.87){};
            \node[shape=coordinate](5)at(1.00,0.00){};
            \node[shape=coordinate](6)at(0.50,-0.87){};
            \draw[CliqueEdge](1)edge[]node[CliqueLabel]{}(2);
            \draw[CliqueEdge](1)edge[]node[CliqueLabel]{}(6);
            \draw[CliqueEdge](2)edge[]node[CliqueLabel]{}(3);
            \draw[CliqueEdge](3)edge[]node[CliqueLabel]
                {\begin{math}a\end{math}}(4);
            \draw[CliqueEdge](4)edge[]node[CliqueLabel]{}(5);
            \draw[CliqueEdge](5)edge[]node[CliqueLabel]{}(6);
            \node[left of=3,node distance=2mm,font=\scriptsize]
                {\begin{math}i\end{math}};
            \node[right of=4,node distance=4mm,font=\scriptsize]
                {\begin{math}i\!+\!1\end{math}};
            \node[font=\footnotesize](name)at(0,0)
                {\begin{math}\Pfr\end{math}};
        \end{tikzpicture}
        \end{split}
        \enspace \circ_i \enspace\enspace
        \begin{split}
        \begin{tikzpicture}[scale=.45]
            \node[CliquePoint](1)at(-0.50,-0.87){};
            \node[shape=coordinate](2)at(-1.00,-0.00){};
            \node[shape=coordinate](3)at(-0.50,0.87){};
            \node[shape=coordinate](4)at(0.50,0.87){};
            \node[shape=coordinate](5)at(1.00,0.00){};
            \node[CliquePoint](6)at(0.50,-0.87){};
            \draw[CliqueEdge,RawSienna!80]
                (1)edge[]node[CliqueLabel]{}(2);
            \draw[CliqueEdge,draw=RawSienna!80]
                (1)edge[]node[CliqueLabel]{\begin{math}b\end{math}}(6);
            \draw[CliqueEdge,RawSienna!80]
                (2)edge[]node[CliqueLabel]{}(3);
            \draw[CliqueEdge,RawSienna!80]
                (3)edge[]node[CliqueLabel]{}(4);
            \draw[CliqueEdge,RawSienna!80]
                (4)edge[]node[CliqueLabel]{}(5);
            \draw[CliqueEdge,RawSienna!80]
                (5)edge[]node[CliqueLabel]{}(6);
            \node[font=\footnotesize](name)at(0,0)
                {\begin{math}\Qfr\end{math}};
        \end{tikzpicture}
        \end{split}
        \quad = \quad
        \begin{split}
        \begin{tikzpicture}[scale=.45]
            \begin{scope}
            \node[shape=coordinate](1)at(-0.50,-0.87){};
            \node[shape=coordinate](2)at(-1.00,-0.00){};
            \node[CliquePoint](3)at(-0.50,0.87){};
            \node[CliquePoint](4)at(0.50,0.87){};
            \node[shape=coordinate](5)at(1.00,0.00){};
            \node[shape=coordinate](6)at(0.50,-0.87){};
            \draw[CliqueEdge](1)edge[]node[CliqueLabel]{}(2);
            \draw[CliqueEdge](1)edge[]node[CliqueLabel]{}(6);
            \draw[CliqueEdge](2)edge[]node[CliqueLabel]{}(3);
            \draw[CliqueEdge](3)edge[]node[CliqueLabel]
                {\begin{math}a\end{math}}(4);
            \draw[CliqueEdge](4)edge[]node[CliqueLabel]{}(5);
            \draw[CliqueEdge](5)edge[]node[CliqueLabel]{}(6);
            \node[left of=3,node distance=2mm,font=\scriptsize]
                {\begin{math}i\end{math}};
            \node[right of=4,node distance=4mm,font=\scriptsize]
                {\begin{math}i\!+\!1\end{math}};
            \node[font=\footnotesize](name)at(0,0)
                {\begin{math}\Pfr\end{math}};
            \end{scope}
            \begin{scope}[yshift=2.2cm]
            \node[CliquePoint](1)at(-0.50,-0.87){};
            \node[shape=coordinate](2)at(-1.00,-0.00){};
            \node[shape=coordinate](3)at(-0.50,0.87){};
            \node[shape=coordinate](4)at(0.50,0.87){};
            \node[shape=coordinate](5)at(1.00,0.00){};
            \node[CliquePoint](6)at(0.50,-0.87){};
            \draw[CliqueEdge,RawSienna!80]
                (1)edge[]node[CliqueLabel]{}(2);
            \draw[CliqueEdge,draw=RawSienna!80]
                (1)edge[]node[CliqueLabel]{\begin{math}b\end{math}}(6);
            \draw[CliqueEdge,RawSienna!80]
                (2)edge[]node[CliqueLabel]{}(3);
            \draw[CliqueEdge,RawSienna!80]
                (3)edge[]node[CliqueLabel]{}(4);
            \draw[CliqueEdge,RawSienna!80]
                (4)edge[]node[CliqueLabel]{}(5);
            \draw[CliqueEdge,RawSienna!80]
                (5)edge[]node[CliqueLabel]{}(6);
            \node[font=\footnotesize](name)at(0,0)
                {\begin{math}\Qfr\end{math}};
            \end{scope}
        \end{tikzpicture}
        \end{split}
        \quad = \quad
        \begin{split}
        \begin{tikzpicture}[scale=.65]
            \node[shape=coordinate](1)at(-0.31,-0.95){};
            \node[shape=coordinate](2)at(-0.81,-0.59){};
            \node[CliquePoint](3)at(-1.00,-0.00){};
            \node[shape=coordinate](4)at(-0.81,0.59){};
            \node[shape=coordinate](5)at(-0.31,0.95){};
            \node[shape=coordinate](6)at(0.31,0.95){};
            \node[shape=coordinate](7)at(0.81,0.59){};
            \node[CliquePoint](8)at(1.00,0.00){};
            \node[shape=coordinate](9)at(0.81,-0.59){};
            \node[shape=coordinate](10)at(0.31,-0.95){};
            \draw[CliqueEdge](1)edge[]node[]{}(2);
            \draw[CliqueEdge](1)edge[]node[]{}(10);
            \draw[CliqueEdge](2)edge[]node[]{}(3);
            \draw[CliqueEdge,RawSienna!80](3)edge[]node[]{}(4);
            \draw[CliqueEdge,RawSienna!80](4)edge[]node[]{}(5);
            \draw[CliqueEdge,RawSienna!80](5)edge[]node[]{}(6);
            \draw[CliqueEdge,RawSienna!80](6)edge[]node[]{}(7);
            \draw[CliqueEdge,RawSienna!80](7)edge[]node[]{}(8);
            \draw[CliqueEdge](8)edge[]node[]{}(9);
            \draw[CliqueEdge](9)edge[]node[]{}(10);
            \node[left of=3,node distance=2mm,font=\scriptsize]
                {\begin{math}i\end{math}};
            \node[right of=8,node distance=5mm,font=\scriptsize]
                {\begin{math}i\!+\!m\end{math}};
            \draw[CliqueEdge,draw=RoyalBlue!50!RawSienna!50]
                (3)edge[]node[CliqueLabel]
                {\begin{math}a \Op b\end{math}}(8);
        \end{tikzpicture}
        \end{split}
    \end{equation*}
    \vspace{-2em}
    \caption{The partial composition of $\Cli\Mca$. Here, $\Pfr$ and
    $\Qfr$ are two $\Mca$-cliques. The label of the $i$-th edge of
    $\Pfr$ is $a \in \Mca$ and the label of the base of $\Qfr$
    is~$b \in \Mca$. The size of $\Qfr$ is~$m$.}
    \label{fig:composition_Cli_M}
\end{figure}
For example, in $\Cli\Z$, one has the two partial compositions
\begin{equation}
    \begin{split}
    \begin{tikzpicture}[scale=.5]
        \node[CliquePoint](1)at(-0.50,-0.87){};
        \node[CliquePoint](2)at(-1.00,-0.00){};
        \node[CliquePoint](3)at(-0.50,0.87){};
        \node[CliquePoint](4)at(0.50,0.87){};
        \node[CliquePoint](5)at(1.00,0.00){};
        \node[CliquePoint](6)at(0.50,-0.87){};
        \draw[CliqueEdge](1)edge[]node[CliqueLabel]
            {\begin{math}1\end{math}}(2);
        \draw[CliqueEdge](1)edge[]node[CliqueLabel]
            {\begin{math}-2\end{math}}(5);
        \draw[CliqueEmptyEdge](1)edge[]node[CliqueLabel]{}(6);
        \draw[CliqueEdge](2)edge[]node[CliqueLabel]
            {\begin{math}-2\end{math}}(3);
        \draw[CliqueEmptyEdge](3)edge[]node[CliqueLabel]{}(4);
        \draw[CliqueEdge](3)edge[]node[CliqueLabel]
            {\begin{math}1\end{math}}(5);
        \draw[CliqueEmptyEdge](4)edge[]node[CliqueLabel]{}(5);
        \draw[CliqueEmptyEdge](5)edge[]node[CliqueLabel]{}(6);
    \end{tikzpicture}
    \end{split}
    \enspace \circ_2 \enspace
    \begin{split}
    \begin{tikzpicture}[scale=.4]
        \node[CliquePoint](1)at(-0.71,-0.71){};
        \node[CliquePoint](2)at(-0.71,0.71){};
        \node[CliquePoint](3)at(0.71,0.71){};
        \node[CliquePoint](4)at(0.71,-0.71){};
        \draw[CliqueEmptyEdge](1)edge[]node[CliqueLabel]{}(2);
        \draw[CliqueEdge](1)edge[]node[CliqueLabel,near end]
            {\begin{math}1\end{math}}(3);
        \draw[CliqueEdge](1)edge[]node[CliqueLabel]
            {\begin{math}3\end{math}}(4);
        \draw[CliqueEmptyEdge](2)edge[]node[CliqueLabel]{}(3);
        \draw[CliqueEdge](2)edge[]node[CliqueLabel,near start]
            {\begin{math}1\end{math}}(4);
        \draw[CliqueEdge](3)edge[]node[CliqueLabel]
            {\begin{math}2\end{math}}(4);
    \end{tikzpicture}
    \end{split}
    \enspace = \enspace
    \begin{split}
    \begin{tikzpicture}[scale=.7]
        \node[CliquePoint](1)at(-0.38,-0.92){};
        \node[CliquePoint](2)at(-0.92,-0.38){};
        \node[CliquePoint](3)at(-0.92,0.38){};
        \node[CliquePoint](4)at(-0.38,0.92){};
        \node[CliquePoint](5)at(0.38,0.92){};
        \node[CliquePoint](6)at(0.92,0.38){};
        \node[CliquePoint](7)at(0.92,-0.38){};
        \node[CliquePoint](8)at(0.38,-0.92){};
        \draw[CliqueEdge](1)edge[]node[CliqueLabel]
            {\begin{math}1\end{math}}(2);
        \draw[CliqueEdge](1)edge[]node[CliqueLabel]
            {\begin{math}-2\end{math}}(7);
        \draw[CliqueEmptyEdge](1)edge[]node[CliqueLabel]{}(8);
        \draw[CliqueEmptyEdge](2)edge[]node[CliqueLabel]{}(3);
        \draw[CliqueEdge](2)edge[]node[CliqueLabel]
            {\begin{math}1\end{math}}(4);
        \draw[CliqueEdge](2)edge[]node[CliqueLabel]
            {\begin{math}1\end{math}}(5);
        \draw[CliqueEmptyEdge](3)edge[]node[CliqueLabel]{}(4);
        \draw[CliqueEdge](3)edge[]node[CliqueLabel]
            {\begin{math}1\end{math}}(5);
        \draw[CliqueEdge](4)edge[]node[CliqueLabel]
            {\begin{math}2\end{math}}(5);
        \draw[CliqueEmptyEdge](5)edge[]node[CliqueLabel]{}(6);
        \draw[CliqueEdge](5)edge[]node[CliqueLabel]
            {\begin{math}1\end{math}}(7);
        \draw[CliqueEmptyEdge](6)edge[]node[CliqueLabel]{}(7);
        \draw[CliqueEmptyEdge](7)edge[]node[CliqueLabel]{}(8);
    \end{tikzpicture}
    \end{split}\,,
    \qquad \qquad
    \begin{split}
    \begin{tikzpicture}[scale=.5]
        \node[CliquePoint](1)at(-0.50,-0.87){};
        \node[CliquePoint](2)at(-1.00,-0.00){};
        \node[CliquePoint](3)at(-0.50,0.87){};
        \node[CliquePoint](4)at(0.50,0.87){};
        \node[CliquePoint](5)at(1.00,0.00){};
        \node[CliquePoint](6)at(0.50,-0.87){};
        \draw[CliqueEdge](1)edge[]node[CliqueLabel]
            {\begin{math}1\end{math}}(2);
        \draw[CliqueEdge](1)edge[]node[CliqueLabel]
            {\begin{math}-2\end{math}}(5);
        \draw[CliqueEmptyEdge](1)edge[]node[CliqueLabel]{}(6);
        \draw[CliqueEdge](2)edge[]node[CliqueLabel]
            {\begin{math}-2\end{math}}(3);
        \draw[CliqueEmptyEdge](3)edge[]node[CliqueLabel]{}(4);
        \draw[CliqueEdge](3)edge[]node[CliqueLabel]
            {\begin{math}1\end{math}}(5);
        \draw[CliqueEmptyEdge](4)edge[]node[CliqueLabel]{}(5);
        \draw[CliqueEmptyEdge](5)edge[]node[CliqueLabel]{}(6);
    \end{tikzpicture}
    \end{split}
    \enspace \circ_2 \enspace
    \begin{split}
    \begin{tikzpicture}[scale=.4]
        \node[CliquePoint](1)at(-0.71,-0.71){};
        \node[CliquePoint](2)at(-0.71,0.71){};
        \node[CliquePoint](3)at(0.71,0.71){};
        \node[CliquePoint](4)at(0.71,-0.71){};
        \draw[CliqueEmptyEdge](1)edge[]node[CliqueLabel]{}(2);
        \draw[CliqueEdge](1)edge[]node[CliqueLabel,near end]
            {\begin{math}1\end{math}}(3);
        \draw[CliqueEdge](1)edge[]node[CliqueLabel]
            {\begin{math}2\end{math}}(4);
        \draw[CliqueEmptyEdge](2)edge[]node[CliqueLabel]{}(3);
        \draw[CliqueEdge](2)edge[]node[CliqueLabel,near start]
            {\begin{math}1\end{math}}(4);
        \draw[CliqueEdge](3)edge[]node[CliqueLabel]
            {\begin{math}2\end{math}}(4);
    \end{tikzpicture}
    \end{split}
    \enspace = \enspace
    \begin{split}
    \begin{tikzpicture}[scale=.7]
        \node[CliquePoint](1)at(-0.38,-0.92){};
        \node[CliquePoint](2)at(-0.92,-0.38){};
        \node[CliquePoint](3)at(-0.92,0.38){};
        \node[CliquePoint](4)at(-0.38,0.92){};
        \node[CliquePoint](5)at(0.38,0.92){};
        \node[CliquePoint](6)at(0.92,0.38){};
        \node[CliquePoint](7)at(0.92,-0.38){};
        \node[CliquePoint](8)at(0.38,-0.92){};
        \draw[CliqueEdge](1)edge[]node[CliqueLabel]
            {\begin{math}1\end{math}}(2);
        \draw[CliqueEdge](1)edge[]node[CliqueLabel]
            {\begin{math}-2\end{math}}(7);
        \draw[CliqueEmptyEdge](1)edge[]node[CliqueLabel]{}(8);
        \draw[CliqueEmptyEdge](2)edge[]node[CliqueLabel]{}(3);
        \draw[CliqueEdge](2)edge[]node[CliqueLabel]
            {\begin{math}1\end{math}}(4);
        \draw[CliqueEmptyEdge](3)edge[]node[CliqueLabel]{}(4);
        \draw[CliqueEdge](3)edge[]node[CliqueLabel]
            {\begin{math}1\end{math}}(5);
        \draw[CliqueEdge](4)edge[]node[CliqueLabel]
            {\begin{math}2\end{math}}(5);
        \draw[CliqueEmptyEdge](5)edge[]node[CliqueLabel]{}(6);
        \draw[CliqueEdge](5)edge[]node[CliqueLabel]
            {\begin{math}1\end{math}}(7);
        \draw[CliqueEmptyEdge](6)edge[]node[CliqueLabel]{}(7);
        \draw[CliqueEmptyEdge](7)edge[]node[CliqueLabel]{}(8);
    \end{tikzpicture}
    \end{split}\,.
\end{equation}

Moreover, if $\Mca_1$ and $\Mca_2$ are two unitary magmas and
$\phi : \Mca_1 \to \Mca_2$ is a unitary magma morphism, we define
\begin{math}
    \Cli\phi : \Cli\Mca_1 \to \Cli\Mca_2
\end{math}
as the linear map sending any $\Mca_1$-clique $\Pfr$ of size $n$ to the
$\Mca_2$-clique $(\Cli\phi)(\Pfr)$ of size $n$ such that, for any arc
$(x, y) \in \Arcs_\Pfr$,
\begin{math} \label{equ:morphism_Pol_M}
    ((\Cli\phi)(\Pfr))(x, y) := \phi(\Pfr(x, y)).
\end{math}

\begin{Theorem} \label{thm:clique_construction}
    The construction $\Cli$ is a functor from the category of unitary
    magmas to the category of operads. Moreover, $\Cli$ respects
    injections and surjections, and all operads of the image of $\Cli$
    are set-operads.
\end{Theorem}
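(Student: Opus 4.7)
The plan is to establish the three assertions separately: first that $\Cli\Mca$ is an operad for every unitary magma $\Mca$, then that $\Cli$ is functorial, and finally the preservation properties. The bulk of the work lies in the operad axioms, and within these in the nested associativity.

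For unitality, I would observe that the unique basis element of $\Cli\Mca(1)$ has its arc labeled by $\Unit_\Mca$, so gluing it into $\Pfr$ (or $\Pfr$ into it) only replaces one label $a$ by $\Unit_\Mca \Op a = a$ or $a \Op \Unit_\Mca = a$, leaving all other data unchanged. For the two associativity axioms I would argue separately. In the parallel case $(\Pfr \circ_i \Qfr) \circ_{j + m - 1} \Rfr = (\Pfr \circ_j \Rfr) \circ_i \Qfr$ (with $i < j$, $m = |\Qfr|$, and $\ell = |\Rfr|$), both sides glue $\Qfr$ and $\Rfr$ on disjoint edges of $\Pfr$, producing the same two modified labels $\Pfr(i, i+1) \Op \Qfr(1, m+1)$ and $\Pfr(j, j+1) \Op \Rfr(1, \ell+1)$; the underlying decorated cliques then coincide by direct inspection after renumbering. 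In the nested case $(\Pfr \circ_i \Qfr) \circ_{i + k - 1} \Rfr = \Pfr \circ_i (\Qfr \circ_k \Rfr)$, the crucial observation is that a partial composition never touches the label of the base of the inserted clique. Hence on the right-hand side the base of $\Qfr \circ_k \Rfr$ still carries $\Qfr(1, m+1)$, so its gluing into the $i$-th edge of $\Pfr$ yields the label $\Pfr(i, i+1) \Op \Qfr(1, m+1)$, exactly as on the left; the other modified label is $\Qfr(k, k+1) \Op \Rfr(1, \ell+1)$ on both sides. Notably, no label is ever a triple product, so only the unitality of $\Op$ intervenes — its associativity is never used.

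Functoriality and the preservation statements then follow almost by inspection. Given a unitary magma morphism $\phi : \Mca_1 \to \Mca_2$, the map $\Cli\phi$ acts on the basis of $\Mca_1$-cliques by pointwise relabeling of arcs. It preserves the operadic unit because $\phi(\Unit_{\Mca_1}) = \Unit_{\Mca_2}$, and it intertwines partial compositions because the only algebraic operation occurring in a gluing is one application of $\Op$, which $\phi$ preserves. The identities $\Cli(\mathrm{id}) = \mathrm{id}$ and $\Cli(\phi \circ \psi) = \Cli\phi \circ \Cli\psi$ hold arc-wise. Since $\Cli\phi$ acts on a basis by pointwise relabeling, it is injective (resp. surjective) whenever $\phi$ is. Finally, the partial composition of two $\Mca$-cliques is again a single $\Mca$-clique rather than a linear combination, so each $\Cli\Mca$ is a set-operad.

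The main (and essentially only) obstacle is the nested associativity axiom, which requires careful bookkeeping of which vertices and edges lie where in $\Pfr \circ_i \Qfr$ after renumbering, together with a clean articulation of the key principle that gluing never modifies the base of the inserted clique. Once that principle is isolated, every claim in the theorem reduces to routine verification.
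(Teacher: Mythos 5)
Your proposal is correct and follows essentially the same route as the paper, which merely sketches the verification of the two associativity axioms and unitality (referring to its figure) and leaves functoriality and the preservation statements implicit. Your write-up in fact supplies more detail than the paper's proof, and your isolated observation that the base of the inserted clique is never relabeled — so that no triple product of $\Op$ ever arises and associativity of $\Op$ is not needed — is exactly the point that makes the construction work for arbitrary (non-associative) unitary magmas.
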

\begin{proof}
    We just sketch the proof of the fact that $\Cli\Mca$ is an operad
    when $\Mca$ is a unitary magma. This amounts to prove that the
    partial composition of $\Cli\Mca$ satisfies, for all $\Mca$-cliques
    $\Pfr$ (resp. $\Qfr$, $\Rfr$) of sizes $n$ (resp. $m$, $k$),
    \begin{math}
        (\Pfr \circ_i \Qfr) \circ_{i + j - 1} \Rfr
        = \Pfr \circ_i (\Qfr \circ_j \Rfr)
    \end{math}
    where $i \in [n]$, $j \in [m]$,
    \begin{math}
        (\Pfr \circ_i \Qfr) \circ_{j + m - 1} \Rfr
        = (\Pfr \circ_j \Rfr) \circ_i \Qfr
    \end{math}
    where $i < j \in [n]$, and
    \begin{math}
        \UnitClique \circ_1 \Pfr = \Pfr = \Pfr \circ_i \UnitClique
    \end{math}
    where $i \in [n]$. Each of these relations can be checked for
    example with the help of Figure~\ref{fig:composition_Cli_M}.
\end{proof}

%%%%%%%%%%%%%%%%%%%%%%%%%%%%%%%%%%%%%%%%%%%%%%%%%%%%%%%%%%%%%%%%%%%%%%%%
%%%%%%%%%%%%%%%%%%%%%%%%%%%%%%%%%%%%%%%%%%%%%%%%%%%%%%%%%%%%%%%%%%%%%%%%
\subsection{General properties}

\begin{Proposition} \label{prop:dimensions_Cli_M}
    Let $\Mca$ be a finite unitary magma. For all $n \geq 2$,
    \begin{math} \label{equ:dimensions_Cli_M}
        \dim \Cli\Mca(n) = m^{\binom{n + 1}{2}},
    \end{math}
    where $m := \# \Mca$.
\end{Proposition}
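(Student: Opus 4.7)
The plan is to argue by direct enumeration, since by construction the space $\Cli\Mca(n)$ for $n \geq 2$ has as basis the set of all $\Mca$-cliques of size $n$, so $\dim \Cli\Mca(n)$ equals the cardinality of this basis.

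First I would recall from the definitions that an $\Mca$-clique of size $n$ is the data of a complete graph on the vertex set $[n+1]$ together with a labeling map $\phi_\Pfr : \Arcs_\Pfr \to \Mca$. Two $\Mca$-cliques of size $n$ coincide as basis elements if and only if their labeling maps agree on every arc, so the cliques of size $n$ are in bijection with the set of maps $\Arcs_\Pfr \to \Mca$ where $\Arcs_\Pfr$ is the fixed set of arcs of the complete graph on $[n+1]$.

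Next I would count the arcs: an arc is a pair $(x,y)$ with $1 \leq x < y \leq n+1$, so $\#\Arcs_\Pfr = \binom{n+1}{2}$. Since each of these $\binom{n+1}{2}$ arcs is labeled independently by one of the $m = \#\Mca$ elements (with no constraints, as labels equal to $\Unit_\Mca$ are allowed and simply correspond to non-solid arcs), the number of such labelings is $m^{\binom{n+1}{2}}$, yielding the claimed dimension.

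There is essentially no obstacle here; the only minor subtlety worth a sentence is to stress that the hypothesis $n \geq 2$ is needed because $\Cli\Mca(1)$ is defined ad hoc as a one-dimensional space spanned by the unique clique whose base is labeled by $\Unit_\Mca$ (rather than by an arbitrary element of $\Mca$), so the formula $m^{\binom{n+1}{2}} = m$ would fail at $n=1$ unless $m = 1$.
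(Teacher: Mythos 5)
Your proof is correct and is the evident counting argument: the basis of $\Cli\Mca(n)$ for $n \geq 2$ is the set of all labelings of the $\binom{n+1}{2}$ arcs of the clique on $[n+1]$ by elements of $\Mca$, giving $m^{\binom{n+1}{2}}$. The paper omits any proof of this proposition, and your remark about why $n \geq 2$ is required (the arity-one component being the span of the single unit clique) is an accurate and worthwhile clarification.
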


If $\Pfr$ is an $\Mca$-clique, we say that two diagonals $(x, y)$ and
$(x', y')$ of $\Pfr$ are {\em crossed} if $x < x' < y < y'$ or
$x' < x < y' < y$. Let $\Gen_{\Cli\Mca}$ be the set of all
$\Mca$-cliques $\Pfr$ such that, for any diagonal $(x, y)$ of $\Pfr$,
there is at least one {\bf solid} diagonal $(x', y')$ of $\Pfr$ such that
$(x, y)$ and $(x', y')$ are crossed. Observe that, according to this
description, all $\Mca$-cliques of size $2$ belong to~$\Gen_{\Cli\Mca}$.

\begin{Proposition} \label{prop:generating_set_Cli_M}
    Let $\Mca$ be a unitary magma. The set $\Gen_{\Cli\Mca}$ is the
    unique minimal generating set of~$\Cli\Mca$.
\end{Proposition}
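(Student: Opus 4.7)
My plan is to characterize the unique minimal generating set of $\Cli\Mca$ as the set of operadically \emph{indecomposable} $\Mca$-cliques, namely those that cannot be written in the form $\Pfr_1 \circ_i \Pfr_2$ with $|\Pfr_1|, |\Pfr_2| \geq 2$, and then show that this set is exactly $\Gen_{\Cli\Mca}$. Since $\Cli\Mca$ is a set-operad by Theorem~\ref{thm:clique_construction}, the set of indecomposables is automatically its unique minimal generating set, so the whole statement reduces to an equality between two explicit sets of $\Mca$-cliques.

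For the inclusion $\Gen_{\Cli\Mca} \subseteq \{\text{indecomposables}\}$, I would argue contrapositively. Assume $\Pfr = \Pfr_1 \circ_i \Pfr_2$ with $|\Pfr_1| = n \geq 2$ and $|\Pfr_2| = m \geq 2$. Reading the vertex numbering from the partial composition (Figure~\ref{fig:composition_Cli_M}), the gluing arc becomes the arc $(i, i + m)$ of $\Pfr$, which is a diagonal since $n, m \geq 2$. Every other arc of $\Pfr$ comes either from $\Pfr_1$, with endpoints in $\{1, \dots, i\} \cup \{i+m, \dots, n+m-1\}$, or from $\Pfr_2$, with endpoints in $\{i, \dots, i+m\}$; in neither case can it cross $(i, i+m)$. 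Thus $(i, i+m)$ is a diagonal crossed by no arc at all, so $\Pfr \notin \Gen_{\Cli\Mca}$.

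For the reverse inclusion, suppose $\Pfr \notin \Gen_{\Cli\Mca}$; I would exhibit a decomposition. Pick a diagonal $(x, y)$ of $\Pfr$ that no solid diagonal crosses, and set $m := y - x \geq 2$. Define $\Pfr_1$ on the outer vertices $\{1, \dots, x\} \cup \{y, \dots, n+1\}$ renumbered consecutively (so $|\Pfr_1| = n + 1 - m \geq 2$) by inheriting all arc labels from $\Pfr$ and setting its $x$-th edge to $\Unit_\Mca$; define $\Pfr_2$ on the inner vertices $\{x, \dots, y\}$ similarly, with base label $\Pfr(x, y)$. The key point is that any arc of $\Pfr$ with one endpoint strictly inside and one strictly outside the interval $(x, y)$ necessarily crosses $(x, y)$; since edges cannot cross a diagonal, every such arc is itself a diagonal, and by hypothesis every such diagonal is non-solid, hence already carries the label $\Unit_\Mca$ and does not need to be encoded. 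One then checks directly from the partial composition rule that $\Pfr_1 \circ_x \Pfr_2 = \Pfr$, the glued label being $\Unit_\Mca \Op \Pfr(x, y) = \Pfr(x, y)$.

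The main obstacle I anticipate is the bookkeeping for this decomposition: verifying precisely that every arc of $\Pfr$ is accounted for by exactly one of $\Pfr_1$, $\Pfr_2$, or the glued arc, and that the assumption on $(x, y)$ is exactly what is needed to discard the straddling arcs without losing information. Once this lemma is in place, a routine induction on the size completes both the generation and the uniqueness arguments.
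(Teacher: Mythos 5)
Your argument is correct and is the natural one; the paper, being an extended abstract, states this proposition without proof, so there is nothing to compare against beyond noting that your route (indecomposables $=$ $\Gen_{\Cli\Mca}$, via splitting along an uncrossed diagonal) is surely the intended one. One small imprecision to fix: in the first inclusion, the glued diagonal $(i,i+m)$ \emph{is} crossed by arcs of the resulting complete graph (namely the straddling arcs with one endpoint strictly inside $\{i+1,\dots,i+m-1\}$ and one strictly outside $[i,i+m]$), so it is not "crossed by no arc at all"; what is true, and what suffices since membership in $\Gen_{\Cli\Mca}$ only requires a crossing by a \emph{solid} diagonal, is that all such straddling arcs are labeled $\Unit_\Mca$ in $\Pfr_1\circ_i\Pfr_2$ and hence none of them is solid. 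With that wording corrected, the two inclusions, the size bounds $|\Pfr_1|=n+1-(y-x)\geq 2$ and $|\Pfr_2|=y-x\geq 2$ (which hold exactly because $(x,y)$ is a diagonal), and the induction on arity all go through.
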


Recall that an operad $\Oca$ defined in the category of sets is
{\em basic}~\cite{Val07} if all the maps
$\circ_i^y : \Oca(n) \to \Oca(n + |y| - 1)$, $y \in \Oca$,
defined by $\circ_i(x) := x \circ_i y$ are injective.

\begin{Proposition} \label{prop:basic_Cli_M}
    Let $\Mca$ be a unitary magma. As a set-operad, $\Cli\Mca$ is
    basic if and only if $\Mca$ is right cancellable.
\end{Proposition}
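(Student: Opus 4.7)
The plan is to exploit the explicit form of the partial composition described in the text and Figure~\ref{fig:composition_Cli_M}: in $\Pfr \circ_i \Qfr$, every arc of $\Pfr$ other than its $i$-th edge is carried unchanged (up to the obvious renumbering of vertices) into the result, every arc of $\Qfr$ other than its base is also carried unchanged, and the single arc produced by the gluing of the $i$-th edge of $\Pfr$ with the base of $\Qfr$ carries the label $\Pfr(i,i+1) \Op \Qfr(1,m+1)$. Once this observation is stated explicitly, both directions of the equivalence follow almost formally from the definition of right cancellability.

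For the implication ($\Leftarrow$), assume $\Mca$ is right cancellable, fix an $\Mca$-clique $\Qfr$ of size $m$ together with an admissible index $i$, and suppose that $\Pfr \circ_i \Qfr = \Pfr' \circ_i \Qfr$ for two $\Mca$-cliques $\Pfr$ and $\Pfr'$ of the same size $n$. Reading off the labels of the arcs coming from $\Pfr$ in both sides of the equality, we obtain $\Pfr(x,y) = \Pfr'(x,y)$ for every arc $(x,y) \neq (i,i+1)$, while equality of the glued arcs forces $\Pfr(i,i+1) \Op \Qfr(1,m+1) = \Pfr'(i,i+1) \Op \Qfr(1,m+1)$. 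Right cancellability then yields $\Pfr(i,i+1) = \Pfr'(i,i+1)$, hence $\Pfr = \Pfr'$, so the map $\circ_i^\Qfr$ is injective. For the implication ($\Rightarrow$), I would argue by contrapositive: if $\Mca$ is not right cancellable, pick $a, b, c \in \Mca$ with $a \neq b$ but $a \Op c = b \Op c$. Let $\Pfr$ be the $\Mca$-clique of size $2$ whose first edge is labeled $a$ and whose remaining two arcs are labeled $\Unit_\Mca$, define $\Pfr'$ identically except that the first edge is labeled $b$, and let $\Qfr$ be the $\Mca$-clique of size $2$ whose base is labeled $c$ and whose two edges are labeled $\Unit_\Mca$. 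The composition rule then gives $\Pfr \circ_1 \Qfr = \Pfr' \circ_1 \Qfr$, since the only label that could distinguish the two outputs is the glued one, and by hypothesis $a \Op c = b \Op c$. As $\Pfr \neq \Pfr'$, this witnesses that $\circ_1^\Qfr$ is not injective, so $\Cli\Mca$ is not basic.

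I do not foresee any serious obstacle in carrying out this proof. The only point that requires a minimum of care is the bookkeeping of the renumbering of vertices under gluing in the $(\Leftarrow)$ direction, which is however entirely dictated by the definition of the partial composition; and the explicit construction of the size-$2$ witnesses in the $(\Rightarrow)$ direction, which is straightforward once $a$, $b$, $c$ are fixed.
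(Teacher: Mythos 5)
Your proof is correct, and it follows the only natural route: the paper (an extended abstract) omits the proof of this proposition, but the argument surely intended is exactly the one you give, namely reading off from the definition of the partial composition that $\Pfr \circ_i \Qfr$ determines every label of $\Pfr$ except $\Pfr(i,i+1)$ outright and determines $\Pfr(i,i+1) \Op \Qfr(1,m+1)$, so that injectivity of $\circ_i^{\Qfr}$ is equivalent to right cancellation by $\Qfr(1,m+1)$, with arity-$2$ cliques furnishing the witnesses in the converse direction. The one bookkeeping point worth stating explicitly in a written version is that the arcs joining a vertex of $\Pfr$ to a vertex of $\Qfr$ (other than the glued arc) are labeled by $\Unit_\Mca$ in the composite, so they cannot distinguish the two outputs in your contrapositive step; with that noted, the argument is complete.
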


Let $\rho : \Cli\Mca \to \Cli\Mca$ be the linear map sending any
$\Mca$-clique $\Pfr$ to the $\Mca$-clique obtained by rotating by one
step $\Pfr$ in the counterclockwise direction.

\begin{Proposition} \label{prop:cyclic_Cli_M}
    Let $\Mca$ be a unitary magma. The map $\rho$ endows $\Cli\Mca$
    with a cyclic operad structure.
\end{Proposition}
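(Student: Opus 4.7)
The plan is to verify the three defining axioms of a cyclic nonsymmetric operad structure on $\Cli\Mca$: namely that $\rho$ restricts to an action of the cyclic group of order $n+1$ on each $\Cli\Mca(n)$, preserves the unit, and satisfies the standard compatibility with the partial compositions $\circ_i$.

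The first two axioms are immediate. An $\Mca$-clique of size $n$ is supported on a regular $(n+1)$-gon, so rotating it counterclockwise $n+1$ times returns it to itself; hence $\rho^{n+1} = \mathrm{id}$ on $\Cli\Mca(n)$. The unit $\UnitClique$ consists of two vertices joined by a single boundary arc labeled $\Unit_\Mca$, and a counterclockwise rotation by one step visibly fixes such a configuration.

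For the third axiom, my starting point would be an explicit description of $\rho$ on arc labels: if $\Pfr$ has size $n$, then $(\rho(\Pfr))(x,y) = \Pfr(x', y')$, where $(x', y')$ is the arc obtained by sending each vertex $k$ to $k-1$ modulo $n+1$ (with $0$ replaced by $n+1$) and reordering so that $x' < y'$. Under this description, $\rho$ cyclically shifts the $n+1$ boundary arcs of $\Pfr$: the old base becomes the new first edge, each old edge $i$ with $1 \le i < n$ becomes the new $(i+1)$-th edge, and the old $n$-th edge becomes the new base. The compatibility with $\circ_i$ then splits into two cases, according to whether the composition point stays interior after rotation or is carried onto the base.

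For the interior case, the identity reduces to the geometric observation that gluing $\Qfr$ onto the $i$-th edge of $\Pfr$ and then rotating the whole picture yields the same clique as first rotating $\Pfr$ and then gluing $\Qfr$ at the shifted position; every arc label, including the glued label $\Pfr(i, i+1) \Op \Qfr(1, m+1)$, appears in matching positions on both sides. The main obstacle is the boundary case, in which the composition point falls on the edge that rotation sends onto the new base. There the outer and inner cliques swap their roles under rotation, and the proof amounts to matching, arc by arc, the boundary arcs and the interior diagonal of the rotated composite with those of the corresponding composition built from $\rho(\Pfr)$ and $\rho(\Qfr)$. A picture in the style of Figure~\ref{fig:composition_Cli_M} renders this coincidence transparent and completes the verification.
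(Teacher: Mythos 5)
The paper is an extended abstract and gives no proof of this proposition, so your proposal has to stand on its own. Your overall plan --- check $\rho^{n+1} = \mathrm{id}$, preservation of the unit, and the two cases (interior and boundary) of compatibility with the partial compositions --- is the right one, and the interior case does go through as you say: when the seam $(i, i+m)$ of $\Pfr \circ_i \Qfr$ is not carried onto the base by the rotation, one gets $\rho(\Pfr \circ_i \Qfr) = \rho(\Pfr) \circ_{i'} \Qfr$ for the shifted index $i'$, and every arc label, including the seam label $\Pfr_i \Op \Qfr_0$, matches on both sides.

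The gap is in the boundary case, exactly where you declare the coincidence ``transparent.'' Carry out the arc-by-arc matching you describe. With the convention that $\rho$ sends the first edge to the base (the opposite convention produces the same problem at $i = n$), the seam $(1, m+1)$ of $\Pfr \circ_1 \Qfr$ is carried by $\rho$ to the arc $(m, m+n)$, which exhibits $\rho(\Pfr \circ_1 \Qfr)$ as a composition at position $m$ of an outer clique of size $m$ and an inner clique of size $n$; all non-seam arcs identify these with $\rho(\Qfr)$ and $\rho(\Pfr)$ respectively. But the seam labels do not agree in general: in $\rho(\Pfr \circ_1 \Qfr)$ the seam carries $\Pfr_1 \Op \Qfr_0$ (outer edge label $\Op$ inner base label, in that order, by the definition of $\circ_i$), whereas in $\rho(\Qfr) \circ_m \rho(\Pfr)$ the $m$-th edge of $\rho(\Qfr)$ is the old base of $\Qfr$ and the base of $\rho(\Pfr)$ is the old first edge of $\Pfr$, so the seam carries $\Qfr_0 \Op \Pfr_1$. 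These coincide for all $\Pfr_1, \Qfr_0 \in \Mca$ only when $\Mca$ is commutative. So the verification you sketch actually requires a commutativity hypothesis on $\Mca$ that the statement here does not display, and ``a picture renders this transparent'' hides the one label where the two sides genuinely differ; you need to either invoke commutativity explicitly at this step or exhibit a convention under which the two products reconcile. A secondary, cosmetic point: your description of $\rho$ on vertices ($k \mapsto k-1$) is inconsistent with your later claim that the old base becomes the new first edge; fix one orientation and keep it, since it determines whether the boundary case occurs at $i = 1$ or at $i = n$.
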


Let $\OrdBE$ (resp. $\OrdD$) be the partial order relation on the set of
all $\Mca$-cliques, where, for any $\Mca$-cliques $\Pfr$ and $\Qfr$, one
has $\Pfr \OrdBE \Qfr$ (resp. $\Pfr \OrdD \Qfr$) if $\Qfr$ can be
obtained from $\Pfr$ by replacing some labels $\Unit_\Mca$ of its edges
or its base (resp. solely of its diagonals) by other labels of $\Mca$.
For any $\Mca$-clique $\Pfr$, let the elements of $\Cli\Mca$ defined by
\begin{math}
    \Hsf_\Pfr :=
    \sum_{\Pfr' \OrdBE \Pfr}
    \Pfr'
\end{math}
and
\begin{math}
    \Ksf_\Pfr :=
    \sum_{\Pfr' \OrdD \Pfr}
    (-1)^{\Hamming(\Pfr', \Pfr)}
    \Pfr',
\end{math}
\noindent where $\Hamming(\Pfr', \Pfr)$ is the {\em Hamming distance}
between $\Pfr'$ and $\Pfr$, that is the number of arcs $(x, y)$ such
that $\Pfr'(x, y) \ne \Pfr(x, y)$. By triangularity and by Möbius
inversion, the family of all the $\Hsf_\Pfr$ (resp. $\Ksf_\Pfr$) forms
a basis of~$\Cli\Mca$, called {\em $\Hsf$-basis} (resp.
{\em $\Ksf$-basis}). For instance, in~$\Cli\Z$,
\begin{equation}\begin{split}\end{split}
    \Hsf_{
    \begin{tikzpicture}[scale=0.4]
        \node[CliquePoint](1)at(-0.59,-0.81){};
        \node[CliquePoint](2)at(-0.95,0.31){};
        \node[CliquePoint](3)at(-0.00,1.00){};
        \node[CliquePoint](4)at(0.95,0.31){};
        \node[CliquePoint](5)at(0.59,-0.81){};
        \draw[CliqueEmptyEdge](1)edge[]node[]{}(2);
        \draw[CliqueEmptyEdge](1)edge[]node[]{}(5);
        \draw[CliqueEmptyEdge](2)edge[]node[]{}(3);
        \draw[CliqueEdge](2)edge[]node[CliqueLabel]
            {\begin{math}1\end{math}}(5);
        \draw[CliqueEdge](3)edge[]node[CliqueLabel]
            {\begin{math}1\end{math}}(4);
        \draw[CliqueEdge](4)edge[]node[CliqueLabel]
            {\begin{math}2\end{math}}(5);
        \draw[CliqueEdge](1)edge[]node[CliqueLabel,near end]
            {\begin{math}2\end{math}}(3);
    \end{tikzpicture}}
    =
    \begin{split}
    \begin{tikzpicture}[scale=0.4]
        \node[CliquePoint](1)at(-0.59,-0.81){};
        \node[CliquePoint](2)at(-0.95,0.31){};
        \node[CliquePoint](3)at(-0.00,1.00){};
        \node[CliquePoint](4)at(0.95,0.31){};
        \node[CliquePoint](5)at(0.59,-0.81){};
        \draw[CliqueEmptyEdge](1)edge[]node[]{}(2);
        \draw[CliqueEmptyEdge](1)edge[]node[]{}(5);
        \draw[CliqueEmptyEdge](2)edge[]node[]{}(3);
        \draw[CliqueEdge](2)edge[]node[CliqueLabel]
            {\begin{math}1\end{math}}(5);
        \draw[CliqueEmptyEdge](3)edge[]node[]{}(4);
        \draw[CliqueEmptyEdge](4)edge[]node[]{}(5);
        \draw[CliqueEdge](1)edge[]node[CliqueLabel,near end]
            {\begin{math}2\end{math}}(3);
    \end{tikzpicture}
    \end{split}
    +
    \begin{split}
    \begin{tikzpicture}[scale=0.4]
        \node[CliquePoint](1)at(-0.59,-0.81){};
        \node[CliquePoint](2)at(-0.95,0.31){};
        \node[CliquePoint](3)at(-0.00,1.00){};
        \node[CliquePoint](4)at(0.95,0.31){};
        \node[CliquePoint](5)at(0.59,-0.81){};
        \draw[CliqueEmptyEdge](1)edge[]node[]{}(2);
        \draw[CliqueEmptyEdge](1)edge[]node[]{}(5);
        \draw[CliqueEmptyEdge](2)edge[]node[]{}(3);
        \draw[CliqueEdge](2)edge[]node[CliqueLabel]
            {\begin{math}1\end{math}}(5);
        \draw[CliqueEmptyEdge](3)edge[]node[]{}(4);
        \draw[CliqueEdge](4)edge[]node[CliqueLabel]
            {\begin{math}2\end{math}}(5);
        \draw[CliqueEdge](1)edge[]node[CliqueLabel,near end]
            {\begin{math}2\end{math}}(3);
    \end{tikzpicture}
    \end{split}
    +
    \begin{split}
    \begin{tikzpicture}[scale=0.4]
        \node[CliquePoint](1)at(-0.59,-0.81){};
        \node[CliquePoint](2)at(-0.95,0.31){};
        \node[CliquePoint](3)at(-0.00,1.00){};
        \node[CliquePoint](4)at(0.95,0.31){};
        \node[CliquePoint](5)at(0.59,-0.81){};
        \draw[CliqueEmptyEdge](1)edge[]node[]{}(2);
        \draw[CliqueEmptyEdge](1)edge[]node[]{}(5);
        \draw[CliqueEmptyEdge](2)edge[]node[]{}(3);
        \draw[CliqueEdge](2)edge[]node[CliqueLabel]
            {\begin{math}1\end{math}}(5);
        \draw[CliqueEdge](3)edge[]node[CliqueLabel]
            {\begin{math}1\end{math}}(4);
        \draw[CliqueEmptyEdge](4)edge[]node[]{}(5);
        \draw[CliqueEdge](1)edge[]node[CliqueLabel,near end]
            {\begin{math}2\end{math}}(3);
    \end{tikzpicture}
    \end{split}
    +
    \begin{split}
    \begin{tikzpicture}[scale=0.4]
        \node[CliquePoint](1)at(-0.59,-0.81){};
        \node[CliquePoint](2)at(-0.95,0.31){};
        \node[CliquePoint](3)at(-0.00,1.00){};
        \node[CliquePoint](4)at(0.95,0.31){};
        \node[CliquePoint](5)at(0.59,-0.81){};
        \draw[CliqueEmptyEdge](1)edge[]node[]{}(2);
        \draw[CliqueEmptyEdge](1)edge[]node[]{}(5);
        \draw[CliqueEmptyEdge](2)edge[]node[]{}(3);
        \draw[CliqueEdge](2)edge[]node[CliqueLabel]
            {\begin{math}1\end{math}}(5);
        \draw[CliqueEdge](3)edge[]node[CliqueLabel]
            {\begin{math}1\end{math}}(4);
        \draw[CliqueEdge](4)edge[]node[CliqueLabel]
            {\begin{math}2\end{math}}(5);
        \draw[CliqueEdge](1)edge[]node[CliqueLabel,near end]
            {\begin{math}2\end{math}}(3);
    \end{tikzpicture}
    \end{split}\,,
    \qquad
    \Ksf_{
    \begin{tikzpicture}[scale=0.4]
        \node[CliquePoint](1)at(-0.59,-0.81){};
        \node[CliquePoint](2)at(-0.95,0.31){};
        \node[CliquePoint](3)at(-0.00,1.00){};
        \node[CliquePoint](4)at(0.95,0.31){};
        \node[CliquePoint](5)at(0.59,-0.81){};
        \draw[CliqueEmptyEdge](1)edge[]node[]{}(2);
        \draw[CliqueEmptyEdge](1)edge[]node[]{}(5);
        \draw[CliqueEmptyEdge](2)edge[]node[]{}(3);
        \draw[CliqueEdge](2)edge[]node[CliqueLabel]
            {\begin{math}1\end{math}}(5);
        \draw[CliqueEdge](3)edge[]node[CliqueLabel]
            {\begin{math}1\end{math}}(4);
        \draw[CliqueEdge](4)edge[]node[CliqueLabel]
            {\begin{math}2\end{math}}(5);
        \draw[CliqueEdge](1)edge[]node[CliqueLabel,near end]
            {\begin{math}2\end{math}}(3);
    \end{tikzpicture}}
    =
    \begin{split}
    \begin{tikzpicture}[scale=0.4]
        \node[CliquePoint](1)at(-0.59,-0.81){};
        \node[CliquePoint](2)at(-0.95,0.31){};
        \node[CliquePoint](3)at(-0.00,1.00){};
        \node[CliquePoint](4)at(0.95,0.31){};
        \node[CliquePoint](5)at(0.59,-0.81){};
        \draw[CliqueEmptyEdge](1)edge[]node[]{}(2);
        \draw[CliqueEmptyEdge](1)edge[]node[]{}(5);
        \draw[CliqueEmptyEdge](2)edge[]node[]{}(3);
        \draw[CliqueEdge](2)edge[]node[CliqueLabel]
            {\begin{math}1\end{math}}(5);
        \draw[CliqueEdge](3)edge[]node[CliqueLabel]
            {\begin{math}1\end{math}}(4);
        \draw[CliqueEdge](4)edge[]node[CliqueLabel]
            {\begin{math}2\end{math}}(5);
        \draw[CliqueEdge](1)edge[]node[CliqueLabel,near end]
            {\begin{math}2\end{math}}(3);
    \end{tikzpicture}
    \end{split}
    -
    \begin{split}
    \begin{tikzpicture}[scale=0.4]
        \node[CliquePoint](1)at(-0.59,-0.81){};
        \node[CliquePoint](2)at(-0.95,0.31){};
        \node[CliquePoint](3)at(-0.00,1.00){};
        \node[CliquePoint](4)at(0.95,0.31){};
        \node[CliquePoint](5)at(0.59,-0.81){};
        \draw[CliqueEmptyEdge](1)edge[]node[]{}(2);
        \draw[CliqueEmptyEdge](1)edge[]node[]{}(5);
        \draw[CliqueEmptyEdge](2)edge[]node[]{}(3);
        \draw[CliqueEdge](3)edge[]node[CliqueLabel]
            {\begin{math}1\end{math}}(4);
        \draw[CliqueEdge](4)edge[]node[CliqueLabel]
            {\begin{math}2\end{math}}(5);
        \draw[CliqueEdge](1)edge[]node[CliqueLabel,near end]
            {\begin{math}2\end{math}}(3);
    \end{tikzpicture}
    \end{split}
    -
    \begin{split}
    \begin{tikzpicture}[scale=0.4]
        \node[CliquePoint](1)at(-0.59,-0.81){};
        \node[CliquePoint](2)at(-0.95,0.31){};
        \node[CliquePoint](3)at(-0.00,1.00){};
        \node[CliquePoint](4)at(0.95,0.31){};
        \node[CliquePoint](5)at(0.59,-0.81){};
        \draw[CliqueEmptyEdge](1)edge[]node[]{}(2);
        \draw[CliqueEmptyEdge](1)edge[]node[]{}(5);
        \draw[CliqueEmptyEdge](2)edge[]node[]{}(3);
        \draw[CliqueEdge](2)edge[]node[CliqueLabel]
            {\begin{math}1\end{math}}(5);
        \draw[CliqueEdge](3)edge[]node[CliqueLabel]
            {\begin{math}1\end{math}}(4);
        \draw[CliqueEdge](4)edge[]node[CliqueLabel]
            {\begin{math}2\end{math}}(5);
    \end{tikzpicture}
    \end{split}
    +
    \begin{split}
    \begin{tikzpicture}[scale=0.4]
        \node[CliquePoint](1)at(-0.59,-0.81){};
        \node[CliquePoint](2)at(-0.95,0.31){};
        \node[CliquePoint](3)at(-0.00,1.00){};
        \node[CliquePoint](4)at(0.95,0.31){};
        \node[CliquePoint](5)at(0.59,-0.81){};
        \draw[CliqueEmptyEdge](1)edge[]node[]{}(2);
        \draw[CliqueEmptyEdge](1)edge[]node[]{}(5);
        \draw[CliqueEmptyEdge](2)edge[]node[]{}(3);
        \draw[CliqueEdge](3)edge[]node[CliqueLabel]
            {\begin{math}1\end{math}}(4);
        \draw[CliqueEdge](4)edge[]node[CliqueLabel]
            {\begin{math}2\end{math}}(5);
    \end{tikzpicture}
    \end{split}\,.
\end{equation}

If $\Pfr$ is an $\Mca$-clique, we denote by $\Pfr_0$ (resp. $\Pfr_i$)
the label of its base (resp. $i$-th edge). Moreover,
$\Del_0(\Pfr)$ (resp. $\Del_i(\Pfr)$) is
the $\Mca$-clique obtained by replacing the label of the base
(resp. $i$-th edge) of $\Pfr$ by $\Unit_\Mca$.

\begin{Proposition} \label{prop:composition_Pol_M_basis_H}
    Let $\Mca$ be a unitary magma. The partial composition of $\Cli\Mca$
    expresses over the $\Hsf$-basis, for any $\Mca$-cliques
    $\Pfr$ and $\Qfr$ different from $\UnitClique$ and any valid integer
    $i$, as
    \begin{equation}
        \Hsf_\Pfr \circ_i \Hsf_\Qfr
        =
        \begin{cases}
            \Hsf_{\Pfr \circ_i \Qfr}
            + \Hsf_{\Del_i(\Pfr) \circ_i \Qfr}
            + \Hsf_{\Pfr \circ_i \Del_0(\Qfr)}
            + \Hsf_{\Del_i(\Pfr) \circ_i \Del_0(\Qfr)}
                & \mbox{if } \Pfr_i \ne \Unit_\Mca \mbox{ and }
                    \Qfr_0 \ne \Unit_\Mca, \\
            \Hsf_{\Pfr \circ_i \Qfr}
            + \Hsf_{\Del_i(\Pfr) \circ_i \Qfr}
                & \mbox{if } \Pfr_i \ne \Unit_\Mca, \\
            \Hsf_{\Pfr \circ_i \Qfr}
            + \Hsf_{\Pfr \circ_i \Del_0(\Qfr)}
                & \mbox{if } \Qfr_0 \ne \Unit_\Mca, \\
            \Hsf_{\Pfr \circ_i \Qfr} & \mbox{otherwise}.
        \end{cases}
    \end{equation}
\end{Proposition}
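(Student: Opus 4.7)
The plan is to compute $\Hsf_\Pfr \circ_i \Hsf_\Qfr$ in the natural basis of $\Mca$-cliques and then regroup the resulting terms back into $\Hsf$-basis elements by a careful bookkeeping on which edge and base labels are ``turned off'' in the two factors and in the composite.

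I begin by reformulating the partial order: $\Pfr' \OrdBE \Pfr$ amounts to choosing a subset $S$ of the non-unit edges and base of $\Pfr$ and resetting the labels in $S$ to $\Unit_\Mca$; write $\Pfr_S$ for the resulting $\Mca$-clique. Then $\Hsf_\Pfr = \sum_S \Pfr_S$, and similarly for $\Qfr$ with an indexing subset $T$, so that
\begin{equation*}
\Hsf_\Pfr \circ_i \Hsf_\Qfr \;=\; \sum_{S, T} \Pfr_S \circ_i \Qfr_T.
\end{equation*}

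The next step is to describe each summand $\Rfr_{S,T} := \Pfr_S \circ_i \Qfr_T$. From the definition of the partial composition, $\Rfr_{S,T}$ keeps all diagonals of $\Pfr$ and of $\Qfr$ and carries one additional arc on $(i, i+m)$; since $\Pfr, \Qfr \ne \UnitClique$ forces $n, m \geq 2$, this arc is a \emph{diagonal} of $\Rfr_{S,T}$. Its label is $\alpha_S \Op \beta_T$, where $\alpha_S := \Unit_\Mca$ if $i \in S$ and $\alpha_S := \Pfr_i$ otherwise, and symmetrically for $\beta_T$ with respect to $0 \in T$. The remaining edge and base labels of $\Rfr_{S,T}$ are inherited verbatim (up to renumbering) from $\Pfr_S$ and $\Qfr_T$, so that the non-unit edges and base of $\Rfr_{S,T}$ are in bijection with those of $\Pfr$ outside $S \cup \{i\}$ joined with those of $\Qfr$ outside $T \cup \{0\}$.

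Finally, I split the double sum according to whether $i \in S$ and whether $0 \in T$. In the generic case $\Pfr_i \ne \Unit_\Mca$ and $\Qfr_0 \ne \Unit_\Mca$, the four resulting subsums carry the four possible values $\Pfr_i \Op \Qfr_0$, $\Qfr_0$, $\Pfr_i$, and $\Unit_\Mca$ of the new diagonal label, which are precisely the new-diagonal labels of $\Pfr \circ_i \Qfr$, $\Del_i(\Pfr) \circ_i \Qfr$, $\Pfr \circ_i \Del_0(\Qfr)$, and $\Del_i(\Pfr) \circ_i \Del_0(\Qfr)$. In each subsum, the residual freedom on $(S, T)$ ranges exactly over the subsets of the non-unit edges and base of the corresponding target clique, producing its full $\Hsf$-basis expansion. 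The three other cases of the statement follow from the same argument, with one or both of the choices $i \in S$, $0 \in T$ unavailable from the outset and the corresponding subsums accordingly suppressed.

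The main obstacle is this last bookkeeping step: one must confirm that the newly created arc $(i, i+m)$ is always a diagonal, so that variations of its label are recorded by the \emph{subscript} of an $\Hsf$-basis element rather than contributing to the summation; and that the residual parameters within each subsum are in a genuine one-to-one correspondence with the subsets of the non-unit edges and base of the intended target clique, with no double-counting between the subsums. Once this is verified, the four formulas of the proposition follow directly.
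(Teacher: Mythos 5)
Your proof is correct: expanding both $\Hsf$-factors over the order $\OrdBE$ as sums over reset subsets $S$ and $T$, observing that the glued arc $(i,i+m)$ is always a diagonal of the composite because $\Pfr,\Qfr\neq\UnitClique$ forces $n,m\geq 2$, and then splitting the double sum according to whether the $i$-th edge of $\Pfr$ and the base of $\Qfr$ are reset is exactly the right computation, and the residual parameters in each block do range bijectively over the non-unit edges and base of the corresponding target clique (note that disjointness of the cliques appearing across blocks is not even needed, since the identity only requires the partition of the index pairs $(S,T)$). The paper states this proposition without proof in this extended abstract, so there is nothing to contrast with; yours is the natural argument and matches the example given after the statement, including the coefficient $2$ that arises when two of the target cliques coincide.
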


\begin{Proposition} \label{prop:composition_Pol_M_basis_K}
    Let $\Mca$ be a unitary magma. The partial composition of $\Cli\Mca$
    expresses over the $\Ksf$-basis, for any $\Mca$-cliques
    $\Pfr$ and $\Qfr$ different from $\UnitClique$ and any valid integer
    $i$, as
    \begin{equation}
        \Ksf_\Pfr \circ_i \Ksf_\Qfr
        =
        \begin{cases}
            \Ksf_{\Pfr \circ_i \Qfr}
                & \mbox{if }
                \Pfr_i \Op \Qfr_0 = \Unit_\Mca, \\
            \Ksf_{\Pfr \circ_i \Qfr} +
            \Ksf_{\Del_i(\Pfr) \circ_i \Del_0(\Qfr)}
                & \mbox{otherwise}.
        \end{cases}
    \end{equation}
    %\end{small}
\end{Proposition}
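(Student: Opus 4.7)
The plan is to start from the definition of the $\Ksf$-basis and expand $\Ksf_\Pfr \circ_i \Ksf_\Qfr$ bilinearly, then recognize the result as a signed partial sum of the expansion of $\Ksf_\Rfr$ where $\Rfr := \Pfr \circ_i \Qfr$. The starting point is
\begin{equation*}
    \Ksf_\Pfr \circ_i \Ksf_\Qfr
    = \sum_{\Pfr' \OrdD \Pfr} \sum_{\Qfr' \OrdD \Qfr}
        (-1)^{\Hamming(\Pfr', \Pfr) + \Hamming(\Qfr', \Qfr)}
        \; \Pfr' \circ_i \Qfr'.
\end{equation*}

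The central observation is that the order $\OrdD$ only alters diagonal labels and fixes edges and base; hence $\Pfr'_i = \Pfr_i$ and $\Qfr'_0 = \Qfr_0$ for every $\Pfr' \OrdD \Pfr$ and $\Qfr' \OrdD \Qfr$, so that the label of the merged diagonal $(i, i + m)$ of $\Pfr' \circ_i \Qfr'$ is always the fixed value $\Pfr_i \Op \Qfr_0$, independent of the pair $(\Pfr', \Qfr')$. Moreover the remaining diagonals of $\Pfr' \circ_i \Qfr'$ come from $\Pfr'$ and $\Qfr'$ and live in two disjoint subsets of the diagonals of $\Rfr$. I would conclude from this that the assignment $(\Pfr', \Qfr') \mapsto \Pfr' \circ_i \Qfr'$ is an injection whose image is exactly the set of $\Rfr' \OrdD \Rfr$ whose merged diagonal is labeled by $\Pfr_i \Op \Qfr_0$, and that the Hamming distance is additive, namely $\Hamming(\Pfr' \circ_i \Qfr', \Rfr) = \Hamming(\Pfr', \Pfr) + \Hamming(\Qfr', \Qfr)$.

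I would then split into the two cases of the statement. When $\Pfr_i \Op \Qfr_0 = \Unit_\Mca$, the merged diagonal of $\Rfr$ is itself $\Unit_\Mca$, so every $\Rfr' \OrdD \Rfr$ automatically has $\Unit_\Mca$ on that diagonal and thus lies in the image of the above bijection; the formula reduces at once to $\Ksf_\Pfr \circ_i \Ksf_\Qfr = \Ksf_{\Pfr \circ_i \Qfr}$. When $\Pfr_i \Op \Qfr_0 \ne \Unit_\Mca$, the set $\{\Rfr' \OrdD \Rfr\}$ decomposes into two parts: those $\Rfr'$ preserving the non-$\Unit_\Mca$ merged diagonal (exactly the image of the bijection) and those $\Rfr'$ whose merged diagonal has been reset to $\Unit_\Mca$. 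The second part is precisely $\{\Rfr'' \OrdD \Del_i(\Pfr) \circ_i \Del_0(\Qfr)\}$, because $\Del_i(\Pfr) \circ_i \Del_0(\Qfr)$ coincides with $\Rfr$ except that its merged diagonal is $\Unit_\Mca$. The Hamming distance to $\Rfr$ exceeds the Hamming distance to $\Del_i(\Pfr) \circ_i \Del_0(\Qfr)$ by exactly one on these terms, producing a global sign flip; rearranging the resulting identity $\Ksf_\Rfr = \Ksf_\Pfr \circ_i \Ksf_\Qfr - \Ksf_{\Del_i(\Pfr) \circ_i \Del_0(\Qfr)}$ yields the claimed formula.

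The main obstacle is purely combinatorial bookkeeping: one must verify cleanly the disjoint decomposition of the diagonals of $\Rfr$ into those coming from $\Pfr$, those coming from $\Qfr$, and the merged one, and then deduce from it both the injectivity of the bijection above and the additivity of the Hamming distance. Once this is in place, the two cases of the statement follow immediately from the splitting of $\{\Rfr' \OrdD \Rfr\}$ according to the label of the merged diagonal.
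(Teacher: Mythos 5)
Your argument is correct. The paper states this proposition without proof (it is an extended abstract), but your route is the natural one and it goes through: since $\OrdD$ only relaxes diagonal labels, every term $\Pfr' \circ_i \Qfr'$ carries the fixed label $\Pfr_i \Op \Qfr_0$ on the merged diagonal, the map $(\Pfr', \Qfr') \mapsto \Pfr' \circ_i \Qfr'$ is injective with image the set of $\Rfr' \OrdD \Rfr$ having that label on the merged diagonal, the Hamming distance is additive, and the case split on whether $\Pfr_i \Op \Qfr_0 = \Unit_\Mca$ gives exactly the two lines of the formula, with the extra term $\Ksf_{\Del_i(\Pfr) \circ_i \Del_0(\Qfr)}$ absorbing (with a sign flip of $-1$ per term) the predecessors of $\Rfr$ whose merged diagonal is reset to $\Unit_\Mca$. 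One bookkeeping item you should add to your ``disjoint decomposition'' of the arcs of $\Rfr := \Pfr \circ_i \Qfr$: besides the diagonals inherited from $\Pfr$, those inherited from $\Qfr$, and the merged diagonal $(i, i+m)$, the composition creates new diagonals joining an interior vertex of the glued copy of $\Qfr$ to a vertex outside it; these are labeled $\Unit_\Mca$ in every term $\Pfr' \circ_i \Qfr'$ and in every $\Rfr' \OrdD \Rfr$, so they contribute nothing to the Hamming distances and do not obstruct the bijection, but they must be accounted for when you claim the image of your injection is exactly the asserted set. You should also note that the hypotheses $\Pfr, \Qfr \ne \UnitClique$ guarantee that $(i, i+m)$ really is a diagonal (not an edge or the base) of $\Rfr$, which is what allows it to be erased under~$\OrdD$.
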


For instance, in $\Cli\Z$,
\begin{equation}
    \Hsf_{
    \begin{tikzpicture}[scale=0.25]
        \node[CliquePoint](1)at(-0.87,-0.50){};
        \node[CliquePoint](2)at(-0.00,1.00){};
        \node[CliquePoint](3)at(0.87,-0.50){};
        \draw[CliqueEmptyEdge](1)edge[]node[]{}(2);
        \draw[CliqueEmptyEdge](1)edge[]node[]{}(3);
        \draw[CliqueEdge](2)edge[]node[CliqueLabel]
            {\begin{math}1\end{math}}(3);
    \end{tikzpicture}}
    \circ_2
    \Hsf_{
    \begin{tikzpicture}[scale=0.25]
        \node[CliquePoint](1)at(-0.87,-0.50){};
        \node[CliquePoint](2)at(-0.00,1.00){};
        \node[CliquePoint](3)at(0.87,-0.50){};
        \draw[CliqueEmptyEdge](1)edge[]node[]{}(2);
        \draw[CliqueEdge](1)edge[]node[CliqueLabel]
            {\begin{math}1\end{math}}(3);
        \draw[CliqueEmptyEdge](2)edge[]node[]{}(3);
    \end{tikzpicture}}
    =
    \Hsf_{
    \begin{tikzpicture}[scale=0.35]
        \node[CliquePoint](1)at(-0.71,-0.71){};
        \node[CliquePoint](2)at(-0.71,0.71){};
        \node[CliquePoint](3)at(0.71,0.71){};
        \node[CliquePoint](4)at(0.71,-0.71){};
        \draw[CliqueEmptyEdge](1)edge[]node[]{}(2);
        \draw[CliqueEmptyEdge](1)edge[]node[]{}(4);
        \draw[CliqueEmptyEdge](2)edge[]node[]{}(3);
        \draw[CliqueEmptyEdge](3)edge[]node[]{}(4);
    \end{tikzpicture}}
    +
    2\;
    \Hsf_{
    \begin{tikzpicture}[scale=0.35]
        \node[CliquePoint](1)at(-0.71,-0.71){};
        \node[CliquePoint](2)at(-0.71,0.71){};
        \node[CliquePoint](3)at(0.71,0.71){};
        \node[CliquePoint](4)at(0.71,-0.71){};
        \draw[CliqueEmptyEdge](1)edge[]node[]{}(2);
        \draw[CliqueEmptyEdge](1)edge[]node[]{}(4);
        \draw[CliqueEmptyEdge](2)edge[]node[]{}(3);
        \draw[CliqueEdge](2)edge[]node[CliqueLabel]
            {\begin{math}1\end{math}}(4);
        \draw[CliqueEmptyEdge](3)edge[]node[]{}(4);
    \end{tikzpicture}}
    +
    \Hsf_{
    \begin{tikzpicture}[scale=0.35]
        \node[CliquePoint](1)at(-0.71,-0.71){};
        \node[CliquePoint](2)at(-0.71,0.71){};
        \node[CliquePoint](3)at(0.71,0.71){};
        \node[CliquePoint](4)at(0.71,-0.71){};
        \draw[CliqueEmptyEdge](1)edge[]node[]{}(2);
        \draw[CliqueEmptyEdge](1)edge[]node[]{}(4);
        \draw[CliqueEmptyEdge](2)edge[]node[]{}(3);
        \draw[CliqueEdge](2)edge[]node[CliqueLabel]
            {\begin{math}2\end{math}}(4);
        \draw[CliqueEmptyEdge](3)edge[]node[]{}(4);
    \end{tikzpicture}}\,,
    \qquad
    \Ksf_{
    \begin{tikzpicture}[scale=0.25]
        \node[CliquePoint](1)at(-0.87,-0.50){};
        \node[CliquePoint](2)at(-0.00,1.00){};
        \node[CliquePoint](3)at(0.87,-0.50){};
        \draw[CliqueEmptyEdge](1)edge[]node[]{}(2);
        \draw[CliqueEmptyEdge](1)edge[]node[]{}(3);
        \draw[CliqueEdge](2)edge[]node[CliqueLabel]
            {\begin{math}1\end{math}}(3);
    \end{tikzpicture}}
    \circ_2
    \Ksf_{
    \begin{tikzpicture}[scale=0.25]
        \node[CliquePoint](1)at(-0.87,-0.50){};
        \node[CliquePoint](2)at(-0.00,1.00){};
        \node[CliquePoint](3)at(0.87,-0.50){};
        \draw[CliqueEmptyEdge](1)edge[]node[]{}(2);
        \draw[CliqueEdge](1)edge[]node[CliqueLabel]
            {\begin{math}1\end{math}}(3);
        \draw[CliqueEmptyEdge](2)edge[]node[]{}(3);
    \end{tikzpicture}}
    =
    \Ksf_{
    \begin{tikzpicture}[scale=0.35]
        \node[CliquePoint](1)at(-0.71,-0.71){};
        \node[CliquePoint](2)at(-0.71,0.71){};
        \node[CliquePoint](3)at(0.71,0.71){};
        \node[CliquePoint](4)at(0.71,-0.71){};
        \draw[CliqueEmptyEdge](1)edge[]node[]{}(2);
        \draw[CliqueEmptyEdge](1)edge[]node[]{}(4);
        \draw[CliqueEmptyEdge](2)edge[]node[]{}(3);
        \draw[CliqueEmptyEdge](3)edge[]node[]{}(4);
    \end{tikzpicture}}
    +
    \Ksf_{
    \begin{tikzpicture}[scale=0.35]
        \node[CliquePoint](1)at(-0.71,-0.71){};
        \node[CliquePoint](2)at(-0.71,0.71){};
        \node[CliquePoint](3)at(0.71,0.71){};
        \node[CliquePoint](4)at(0.71,-0.71){};
        \draw[CliqueEmptyEdge](1)edge[]node[]{}(2);
        \draw[CliqueEmptyEdge](1)edge[]node[]{}(4);
        \draw[CliqueEmptyEdge](2)edge[]node[]{}(3);
        \draw[CliqueEdge](2)edge[]node[CliqueLabel]
            {\begin{math}2\end{math}}(4);
        \draw[CliqueEmptyEdge](3)edge[]node[]{}(4);
    \end{tikzpicture}}\,.
\end{equation}

%%%%%%%%%%%%%%%%%%%%%%%%%%%%%%%%%%%%%%%%%%%%%%%%%%%%%%%%%%%%%%%%%%%%%%%%
%%%%%%%%%%%%%%%%%%%%%%%%%%%%%%%%%%%%%%%%%%%%%%%%%%%%%%%%%%%%%%%%%%%%%%%%
%%%%%%%%%%%%%%%%%%%%%%%%%%%%%%%%%%%%%%%%%%%%%%%%%%%%%%%%%%%%%%%%%%%%%%%%
\section{Quotients and suboperads}
\label{sec:quotients}

%%%%%%%%%%%%%%%%%%%%%%%%%%%%%%%%%%%%%%%%%%%%%%%%%%%%%%%%%%%%%%%%%%%%%%%%
%%%%%%%%%%%%%%%%%%%%%%%%%%%%%%%%%%%%%%%%%%%%%%%%%%%%%%%%%%%%%%%%%%%%%%%%
\subsection{Operads on subfamilies of $\Mca$-cliques}
\label{subsec:quotients}

We now define quotients of $\Cli\Mca$, leading to the construction of
some new operads involving various combinatorial objects which are,
basically, $\Mca$-cliques with some restrictions.
Figure~\ref{fig:diagram_operads} shows a diagram containing all the
considered quotients and suboperads of~$\Cli\Mca$.

\begin{description}[fullwidth]
\item[Bubbles.]
An $\Mca$-clique is an {\em $\Mca$-bubble} if it has no solid
diagonals. Let $\Rel_{\Bub\Mca}$ be the subspace of $\Cli\Mca$
generated by all $\Mca$-cliques that are not bubbles. As quotient of
vector spaces,
\begin{math}
    \Bub\Mca := \Cli\Mca/_{\Rel_{\Bub\Mca}}
\end{math}
is the linear span of all $\Mca$-bubbles. Moreover, the space
$\Bub\Mca$ is a quotient of operads of $\Cli\Mca$. When $\Mca$ is
finite, the dimensions of $\Bub \Mca$ satisfy, for any $n \geq 2$,
\begin{math}
    \dim \Bub\Mca(n) = m^{n + 1},
\end{math}
where $m := \# \Mca$.
\item[White $\Mca$-cliques.]
An $\Mca$-clique is {\em white} if it has no solid edges nor
solid base. Let $\Whi\Mca$ be the subspace of $\Cli\Mca$ of all white
$\Mca$-cliques. The space $\Whi\Mca$ is a suboperad of~$\Cli\Mca$.
When $\Mca$ is finite, the dimensions of $\Whi\Mca$ satisfy, for any
$n \geq 2$,
\begin{math}
    \dim \Whi\Mca(n) =
    m^{(n + 1)(n - 2) / 2},
\end{math}
where $m := \# \Mca$.
\item[Restricting the crossing.]
The {\em crossing} of a solid diagonal of an $\Mca$-clique $\Pfr$
is the number of solid diagonals crossing it. The {\em crossing} of
$\Pfr$ is the maximal crossing of its solid diagonals. For any integer
$k \geq 0$, let $\Rel_{\Cro_k\Mca}$ be the subspace of $\Cli\Mca$
generated by all $\Mca$-cliques of crossings greater than $k$. As
quotient of vector spaces,
\begin{math}
    \Cro_k\Mca := \Cli\Mca/_{\Rel_{\Cro_k\Mca}}
\end{math}
is the linear span of all $\Mca$-cliques of crossings no greater
than~$k$. Moreover, the space $\Cro_k\Mca$ is both a quotient and a
suboperad of~$\Cli\Mca$.
Observe that $\Cro_0\Mca$ is the operad $\Bub\Mca$. Let us set
$\NC\Mca := \Cro_0\Mca$. Any $\Mca$-clique of $\NC\Mca$ is a noncrossing
configuration~\cite{FN99} where each diagonal is decorated by an element
of $\Mca \setminus \{\Unit_\Mca\}$. These operads $\NC\Mca$ have a lot
of nice properties and will be studied in
Section~\ref{sec:operads_noncrossing_configurations}.
\item[Acyclic $\Mca$-cliques.]
An $\Mca$-clique is {\em acyclic} if it does not contain any cycle
formed by solid arcs. Let $\Rel_{\Acy\Mca}$ be the subspace of
$\Cli\Mca$ generated by all $\Mca$-cliques that are not acyclic. As
quotient of vector spaces,
\begin{math}
    \Acy\Mca := \Cli\Mca/_{\Rel_{\Acy\Mca}}
\end{math}
is the linear span of all acyclic $\Mca$-cliques. When $\Mca$ has no
nontrivial unit divisors, the space $\Acy\Mca$ is a quotient of
operads of $\Cli\Mca$. Any $\Dbb_0$-clique of $\Acy\Dbb_0$ can be seen
as a forest of trees. The dimensions of this operad begin by $1$, $7$,
$38$, $291$, $2932$ (Sequence~\OEIS{A001858}, except for the first
terms).
\item[Nesting-free $\Mca$-cliques.]
A solid arc $(x', y')$ is {\em nested} in a solid arc $(x, y)$
of an $\Mca$-clique $\Pfr$ if $x \leq x' < y' \leq y$. We say that
$\Pfr$ is {\em nesting-free} if for any solid arcs $(x, y)$ and
$(x', y')$ of $\Pfr$ such that $(x', y')$ is nested in $(x, y)$,
$(x', y') = (x, y)$. Let $\Rel_{\Inf\Mca}$ be the subspace of
$\Cli\Mca$ generated by all $\Mca$-cliques that are not nesting-free.
As quotient of vector spaces,
\begin{math}
    \Inf\Mca := \Cli\Mca/_{\Rel_{\Inf\Mca}}
\end{math}
is the linear span of all nesting-free $\Mca$-cliques. When $\Mca$ has
no nontrivial unit divisors, the space $\Inf\Mca$ is a quotient of
operads of $\Cli\Mca$. Any $\Dbb_0$-clique of $\Inf\Dbb_0$ can be seen
as an nesting-free clique. The dimensions of this operad begin by $1$,
$5$, $14$, $42$, $132$, and are Catalan numbers
(Sequence~\OEIS{A000108}, except for the first terms). In the same way
as considering $\Mca$-cliques of crossings no greater than $k$ leads
to quotients $\Cro_k\Mca$ of $\Cli\Mca$, it is possible to define
analogous quotients $\Inf_k\Mca$ spanned by $\Mca$-cliques having
solid arcs that nest at most $k$ other ones.
\item[Restricting the degree.]
The {\em degree} of a vertex $x$ of an $\Mca$-clique $\Pfr$ is
the number of solid arcs adjacent to $x$. The {\em degree} of $\Pfr$
is the maximal degree of its vertices. For any integer $k \geq 0$, let
$\Rel_{\Deg_k\Mca}$ be the subspace of $\Cli\Mca$ generated by all
$\Mca$-cliques of degrees greater than $k$. As quotient of vector spaces,
\begin{math}
    \Deg_k\Mca := \Cli\Mca/_{\Rel_{\Deg_k\Mca}}
\end{math}
is the linear span of all $\Mca$-cliques of degrees no greater than~$k$.
When $\Mca$ has  no nontrivial unit divisors, the space $\Deg_k\Mca$
is a quotient of operads of $\Cli\Mca$. Observe that $\Deg_0\Mca$ is
the associative operad $\As$. Let us set $\Invol\Mca := \Deg_1\Mca$.
Any $\Dbb_0$-clique of $\Invol\Dbb_0$ of size $n$ can be seen as a
partition of the set $[n + 1]$ in singletons or pairs. In this case,
$\Invol\Dbb_0$ involves involutions, or equivalently standard Young
tableaux. The dimensions of this operad begin by $1$, $4$, $10$, $26$,
$76$ (Sequence~\OEIS{A000085}, except for the first terms). Moreover,
the dimensions of $\Invol\Dbb_1$ begin by $1$, $7$, $25$, $81$, $331$
(Sequence~\OEIS{A047974}, except for the first terms). Besides, any
$\Dbb_0$-clique of $\Deg_2\Dbb_0$ can be seen as a {\em thunderstorm
graph} ({\em i.e.}, a graph where connected components are cycles or
paths). The dimensions of this operad begin by $1$, $8$, $41$, $253$,
$1858$ (Sequence~\OEIS{A136281}, except for the first terms).
\end{description}

%%%%%%%%%%%%%%%%%%%%%%%%%%%%%%%%%%%%%%%%%%%%%%%%%%%%%%%%%%%%%%%%%%%%%%%%
\subsection{Mixing quotients and substructures}
For any operad $\Oca$ and ideals of operads $\Rel_1$ and $\Rel_2$ of
$\Oca$, the space $\Rel_1 + \Rel_2$ is still an ideal of operads of
$\Oca$, and $\Oca/_{\Rel_1 + \Rel_2}$ is a quotient of operads of both
$\Oca/_{\Rel_1}$ and $\Oca/_{\Rel_2}$. Moreover, if $\Oca'$ is a
suboperad of $\Oca$ and $\Rel$ is an ideal of operads of $\Oca$, the
space $\Rel \cap \Oca'$ is an ideal of operads of $\Oca'$, and
$\Oca'/_{\Rel \cap \Oca'}$ is a quotient of operads of $\Oca'$. For these
reasons, we can combine the constructions of
Section~\ref{subsec:quotients} to build a bunch of new quotients of
operads of~$\Cli\Mca$.

When $\Mca$ is finite and has cardinal $2$, several interesting
phenomenons occur already. In this case, $\Mca$ is necessarily isomorphic
to $\N_2$ or to $\Dbb_0$, but only $\Dbb_0$ satisfies the conditions
required by all the propositions of Section~\ref{subsec:quotients}. The
obtained substructures of $\Cli\Dbb_0$ are operads that involve some
very classical combinatorial objects. For instance:
\begin{description}[fullwidth]
\item[Schröder trees.]
Let $\Schro\Mca := \Whi\Mca/_{\Rel_{\Cro_0\Mca} \cap \Whi\Mca}$.
The operad $\Schro\Dbb_0$ involves Schröder trees. Its
dimensions begin by $1$, $1$, $3$, $11$, $45$
(Sequence~\OEIS{A001003}).
\item[Forests of paths.]
Let $\Paths\Mca := \Cli\Mca/_{\Rel_{\Acy\Mca} + \Rel_{\Deg_2\Mca}}$.
The operad $\Paths\Dbb_0$
involves forests of non-rooted trees that are paths. Its
dimensions begin by $1$, $7$, $34$, $206$, $1486$
(Sequence~\OEIS{A011800}, except for the first terms).
\item[Forests of trees.]
Let $\Forests\Mca := \Cli\Mca/_{\Rel_{\Acy\Mca} + \Rel_{\Cro_0\Mca}}$.
The operad $\Forests\Dbb_0$
involves forests of rooted trees without crossing edges. Its
dimensions begin by $1$, $7$, $33$, $181$, $1083$
(Sequence~\OEIS{A054727}, except for the first terms).
\item[Motzkin configurations.]
Let $\Motzkin\Mca := \Cli\Mca/_{\Rel_{\Cro_0\Mca} + \Rel_{\Deg_1\Mca}}$.
The operad $\Motzkin\Dbb_0$
involves Motzkin paths. Its dimensions begin by $1$, $4$, $9$, $21$,
$51$ (Sequence~\OEIS{A001006}, except for the first terms).
\item[Dissections of polygons.]
Let $\Diss\Mca := \Whi\Mca/_{(\Rel_{\Cro_0\Mca} + \Rel_{\Deg_1\Mca})
\cap \Whi\Mca}$.
The operad $\Diss\Dbb_0$ involves dissections of polygons by strictly
disjoint diagonals. Its dimensions begin by $1$, $1$, $3$, $6$, $13$
(Sequence~\OEIS{A093128}, except for the first terms).
\item[Lucas configurations.]
Let $\Luc\Mca := \Cli\Mca/_{\Rel_{\Bub\Mca} + \Rel_{\Deg_1\Mca}}$.
The operad $\Luc\Dbb_0$ involves $\Dbb_0$-bubbles $\Pfr$ such that
any vertex of $\Pfr$ belongs to at most one solid edge. Its
dimensions begin by $1$, $4$, $7$, $11$, $18$, and are Lucas numbers
(Sequence~\OEIS{A000032}, except for the first terms).
\end{description}

%%%%%%%%%%%%%%%%%%%%%%%%%%%%%%%%%%%%%%%%%%%%%%%%%%%%%%%%%%%%%%%%%%%%%%%%
%%%%%%%%%%%%%%%%%%%%%%%%%%%%%%%%%%%%%%%%%%%%%%%%%%%%%%%%%%%%%%%%%%%%%%%%
\subsection{Constructing existing operads}
\label{subsec:constructing_existing_operads}
We give here three examples of already known operads that can be build
through the construction~$\Cli$.

%%%%%%%%%%%%%%%%%%%%%%%%%%%%%%%%%%%%%%%%%%%%%%%%%%%%%%%%%%%%%%%%%%%%%%%%
\paragraph{Bicolored noncrossing configurations.}
The {\em operad of bicolored noncrossing configurations $\BNC$} is an
operad defined in~\cite{CG14} which involves noncrossing configurations
where each solid diagonal can be blue or red, and each edge can be blue
or uncolored. This operad is in fact a special case of our general
construction $\Cli$. Let $\Mca_\BNC := \{\Unit, \Att, \Btt\}$ be the
unitary magma wherein operation $\Op$ is defined so that $\Att$ and
$\Btt$ are idempotent, and $\Att \Op \Btt = \Unit = \Btt \Op \Att$.
Observe that $\Mca_{\BNC}$ is a commutative unitary magma, but, since
$(\Btt \Op \Att) \Op \Att = \Att$ and $\Btt \Op (\Att \Op \Att) = \Unit$,
the operation $\Op$ is not associative.

\begin{Proposition} \label{prop:construction_BNC}
    The suboperad of $\NC\Mca_\BNC$ consisting in its unit and all
    $\Mca_\BNC$-noncrossing configurations without edges labeled by
    $\Unit$ is isomorphic to the operad~$\BNC$.
\end{Proposition}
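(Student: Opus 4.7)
The plan is to construct an explicit map $\phi$ from the underlying combinatorial set of the described suboperad to that of $\BNC$, check that it is a bijection arity by arity, and then verify that it intertwines the partial compositions. Under the identifications $\Att \leftrightarrow$ blue and $\Btt \leftrightarrow$ red on solid diagonals, and $\Att \leftrightarrow$ blue and $\Btt \leftrightarrow$ uncolored on edges, the condition ``no edge labeled by $\Unit_{\Mca_\BNC}$'' translates exactly into the $\BNC$ requirement from \cite{CG14} that every edge of a configuration carries a color. Hence $\phi$ is a bijection in each arity and it sends $\UnitClique$ to the unit of $\BNC$.

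Before matching compositions, I would verify that the specified subset is indeed a suboperad of $\NC\Mca_\BNC$. In $\Pfr \circ_i \Qfr$, the $i$-th edge of $\Pfr$ and the base of $\Qfr$ merge into an internal arc of the composite clique, while the other boundary arcs of $\Pfr$ and $\Qfr$ become the edges of the composite and stay solid by hypothesis. Noncrossingness is preserved since the diagonals of $\Pfr$ and of $\Qfr$ lie in regions of the new clique separated by the merged arc. This gives closure under $\circ_i$.

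The core step is matching the two partial compositions via $\phi$. By definition of $\Cli\Mca_\BNC$, the merged arc of $\Pfr \circ_i \Qfr$ carries the label $\Pfr_i \Op \Qfr_0$, and from the magma table
\begin{equation*}
    \Att \Op \Att = \Att, \qquad
    \Btt \Op \Btt = \Btt, \qquad
    \Att \Op \Btt = \Btt \Op \Att = \Unit,
\end{equation*}
it becomes a blue diagonal, a red diagonal, or an absent diagonal according to the colors of the two glued edges. A direct four-case comparison with the composition rule of $\BNC$ in \cite{CG14} then completes the argument. The main obstacle is not conceptual but bookkeeping: one must verify carefully that the asymmetric reading of $\Btt$ as ``red'' on diagonals and ``uncolored'' on edges is exactly the convention implicit in the combinatorial definition of~$\BNC$, and that our merging rule coincides with theirs in each of the four configurations of merged edges.
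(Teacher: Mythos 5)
The paper states this proposition without proof (it is an extended abstract), and your strategy---an arity-wise bijection followed by a case analysis on the label of the merged arc---is clearly the intended argument: the magma $\Mca_\BNC$ is designed precisely so that $\Op$ reproduces the gluing rule of $\BNC$, and the four cases you isolate are the right crux. One point you must make explicit, though: for $\phi$ to be a bijection in each arity, the \emph{base} of each polygon has to be required to be different from $\Unit$ as well, i.e., the word ``edges'' in the statement must be read as ``edges and base.'' Under the paper's own conventions an edge is an arc $(x,x+1)$ with $(x,x+1)\ne(1,n+1)$, so the literal reading leaves the base free to take all three labels and already in arity $2$ gives $2\cdot 2\cdot 3=12$ elements against $\dim\BNC(2)=8$ (and $\dim\BNC(3)=80$, which is matched only when all boundary arcs carry two states). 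Your closure argument implicitly treats all boundary arcs uniformly, so nothing else in the proof changes, but as written the bijectivity claim rests on this unstated reading. The remaining deferred step---the literal four-case comparison with the composition table of $\BNC$ in the cited reference---is routine and comes out as you predict (same colors give a solid diagonal, mixed colors make it vanish).
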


%%%%%%%%%%%%%%%%%%%%%%%%%%%%%%%%%%%%%%%%%%%%%%%%%%%%%%%%%%%%%%%%%%%%%%%%
\paragraph{Multi-tildes and double multi-tildes.}
Appearing from the context of formal languages theory, {\em multi-tildes}
are operators introduced in~\cite{CCM11} as tools offering a convenient
way to express regular languages. As shown in~\cite{LMN13}, the set of
all multi-tildes admits a very natural structure of an operad~$\MT$.
{\em Double multi-tildes} are extensions of these operators introduced
in~\cite{GLMN16} that increase their expressiveness and admit also a
structure of an operad~$\DMT$. Let $\Mca_\DMT$ be the unitary magma
$\Mca_\DMT := \Dbb_0^2$ and $\Mca_\MT$ be the sub-unitary magma of
$\Mca_\DMT$ on the set $\{(\Unit, \Unit), (0, \Unit)\}$.

\begin{Proposition} \label{prop:construction_MT_DMT}
    The operad $\Cli\Mca_\DMT$ (resp. $\Cli\Mca_\MT$) is isomorphic to
    the suboperad of $\DMT$ (resp. $\MT$) consisting in all double
    (resp. simple) multi-tildes except the three (resp. one) nontrivial
    ones of arity~$1$.
\end{Proposition}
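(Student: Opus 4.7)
The plan is to construct an explicit operad isomorphism $\phi$ in each case and verify compatibility with the partial composition. For $n \geq 2$, an $\Mca_\DMT$-clique of size $n$ is determined by assigning to each of the $\binom{n+1}{2}$ arcs a label in $\Dbb_0^2$, which is exactly the combinatorial data of a double multi-tilde of arity $n$ as recalled from~\cite{GLMN16}: the first (resp.\ second) component of the label on arc $(x,y)$ records whether $(x,y)$ is a solid arc of the first (resp.\ second) layer of tildes. In arity $1$, the space $\Cli\Mca_\DMT(1)$ is one-dimensional by definition and must correspond only to the trivial double multi-tilde, which accounts for the exclusion of the three nontrivial arity-$1$ double multi-tildes from the target suboperad.

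For the simple case, the same construction restricts to $\Mca_\MT \hookrightarrow \Mca_\DMT$: since $\Mca_\MT$ contains only the unit $(\Unit,\Unit)$ and the label $(0,\Unit)$, it isolates a single layer of tildes, and by Theorem~\ref{thm:clique_construction} the construction $\Cli$ respects injections. Thus, if $\phi$ is built for $\DMT$, it automatically restricts to an isomorphism for $\MT$, with the one excluded nontrivial arity-$1$ multi-tilde playing the same role as in the double case.

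The main verification is that the partial composition in $\DMT$ coincides, under $\phi$, with that of $\Cli\Mca_\DMT$. The composition $\Pfr \circ_i \Qfr$ in $\Cli\Mca_\DMT$, as depicted in Figure~\ref{fig:composition_Cli_M}, glues the base of $\Qfr$ onto the $i$-th edge of $\Pfr$, relabels the resulting shared arc by $\Pfr_i \Op \Qfr_0$ computed componentwise in $\Dbb_0^2$, and leaves every other arc with its original label. I would check that this is precisely the substitution rule for (double) multi-tildes from~\cite{LMN13,GLMN16}: componentwise in $\Dbb_0$, the product $\Unit \Op \Unit = \Unit$ and all other products equal $0$, which faithfully encodes that a tilde is present on the merged edge if and only if at least one of the two operands carried a tilde at that position.

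The main obstacle is largely notational: one has to carefully align the ordering and numbering conventions between the arc-indexing $(x,y)$ with $1 \leq x < y \leq n+1$ of $\Mca_\DMT$-cliques and the position-indexing used for (double) multi-tildes in~\cite{LMN13,GLMN16}, and then verify the junction rule arc by arc. Once this translation is fixed, the operad axioms on the image side follow automatically from Theorem~\ref{thm:clique_construction}, and $\phi$ is an isomorphism of operads.
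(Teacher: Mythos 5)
The paper gives no proof of Proposition~\ref{prop:construction_MT_DMT} (it is stated without argument in this extended abstract), so there is nothing to contrast with; your proposal is the natural intended argument: identify arcs $(x,y)$, $1\leq x<y\leq n+1$, with the index pairs of (double) multi-tildes, match the dimensions $2^{\binom{n+1}{2}}$ and $4^{\binom{n+1}{2}}$ in each arity $n\geq 2$ (whence the excluded one, resp.\ three, nontrivial arity-$1$ elements), and check that the absorbing product of $\Dbb_0$, resp.\ $\Dbb_0^2$, reproduces the union rule on the single colliding pair of arcs under substitution. The only point I would insist you actually write out is the index bookkeeping you defer to at the end: that under $\circ_i$ the $i$-th edge of $\Pfr$ and the base of $\Qfr$ are the \emph{only} arcs that collide, and that all other arcs are transported bijectively with the same shift as in the multi-tilde composition of~\cite{LMN13,GLMN16} --- this is where the proof actually lives, and it is routine but not automatic.
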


%%%%%%%%%%%%%%%%%%%%%%%%%%%%%%%%%%%%%%%%%%%%%%%%%%%%%%%%%%%%%%%%%%%%%%%%
%%%%%%%%%%%%%%%%%%%%%%%%%%%%%%%%%%%%%%%%%%%%%%%%%%%%%%%%%%%%%%%%%%%%%%%%
%%%%%%%%%%%%%%%%%%%%%%%%%%%%%%%%%%%%%%%%%%%%%%%%%%%%%%%%%%%%%%%%%%%%%%%%
\section{Operads of decorated noncrossing configurations}
\label{sec:operads_noncrossing_configurations}
In this section, we study in details the suboperad $\NC\Mca$ of
$\Cli\Mca$. As observed in Section~\ref{subsec:quotients}, this operad
involves all $\Mca$-cliques that do not admit crossing solid diagonals.
We call {\em $\Mca$-noncrossing configurations} such objects.

%%%%%%%%%%%%%%%%%%%%%%%%%%%%%%%%%%%%%%%%%%%%%%%%%%%%%%%%%%%%%%%%%%%%%%%%
%%%%%%%%%%%%%%%%%%%%%%%%%%%%%%%%%%%%%%%%%%%%%%%%%%%%%%%%%%%%%%%%%%%%%%%%
\subsection{General properties}
The set of all $\Mca$-noncrossing configurations is in one-to-one
correspondence with the set of Schröder trees ({\em i.e.}, rooted planar
trees where internal nodes have arities $2$ or more) where the edges
adjacent to the roots are labeled on $\Mca$, the edges connecting two
internal nodes are labeled on $\Mca \setminus \{\Unit_\Mca\}$, and the
edges adjacent to the leaves are labeled on $\Mca$. This is realized by
computing the dual trees of $\Mca$-noncrossing configurations by
considering the labels of the solid diagonals. We call these trees
{\em $\Mca$-dual trees}. Here is an example of a $\Z$-noncrossing
configuration and the $\Z$-dual tree encoding it:
\vspace{-1em}
\begin{equation}
    \begin{split}
    \begin{tikzpicture}[scale=1.1]
        \node[CliquePoint](1)at(-0.31,-0.95){};
        \node[CliquePoint](2)at(-0.81,-0.59){};
        \node[CliquePoint](3)at(-1.00,-0.00){};
        \node[CliquePoint](4)at(-0.81,0.59){};
        \node[CliquePoint](5)at(-0.31,0.95){};
        \node[CliquePoint](6)at(0.31,0.95){};
        \node[CliquePoint](7)at(0.81,0.59){};
        \node[CliquePoint](8)at(1.00,0.00){};
        \node[CliquePoint](9)at(0.81,-0.59){};
        \node[CliquePoint](10)at(0.31,-0.95){};
        \draw[CliqueEdge](1)edge[]node[CliqueLabel]
            {\begin{math}1\end{math}}(2);
        \draw[CliqueEdge](1)edge[]node[CliqueLabel]
            {\begin{math}2\end{math}}(5);
        \draw[CliqueEdge](1)edge[]node[CliqueLabel]
            {\begin{math}1\end{math}}(10);
        \draw[CliqueEdge](2)edge[]node[CliqueLabel]
            {\begin{math}4\end{math}}(3);
        \draw[CliqueEdge](2)edge[]node[CliqueLabel]
            {\begin{math}1\end{math}}(4);
        \draw[CliqueEdge](3)edge[]node[CliqueLabel]
            {\begin{math}2\end{math}}(4);
        \draw[CliqueEmptyEdge](4)edge[]node[]{}(5);
        \draw[CliqueEdge](5)edge[]node[CliqueLabel]
            {\begin{math}3\end{math}}(6);
        \draw[CliqueEdge](5)edge[]node[CliqueLabel]
            {\begin{math}3\end{math}}(9);
        \draw[CliqueEdge](5)edge[]node[CliqueLabel]
            {\begin{math}1\end{math}}(10);
        \draw[CliqueEmptyEdge](6)edge[]node[]{}(7);
        \draw[CliqueEdge](6)edge[]node[CliqueLabel]
            {\begin{math}2\end{math}}(9);
        \draw[CliqueEdge](7)edge[]node[CliqueLabel]
            {\begin{math}1\end{math}}(8);
        \draw[CliqueEmptyEdge](8)edge[]node[]{}(9);
        \draw[CliqueEmptyEdge](9)edge[]node[]{}(10);
    \end{tikzpicture}
    \end{split}
    \quad \longleftrightarrow \quad
    \begin{split}
    \begin{tikzpicture}[xscale=.4,yscale=.18]
        \node[Leaf](0)at(0.00,-6.00){};
        \node[Leaf](11)at(9.00,-12.00){};
        \node[Leaf](12)at(10.00,-12.00){};
        \node[Leaf](14)at(12.00,-6.00){};
        \node[Leaf](2)at(1.00,-9.00){};
        \node[Leaf](4)at(3.00,-9.00){};
        \node[Leaf](5)at(4.00,-6.00){};
        \node[Leaf](7)at(6.00,-9.00){};
        \node[Leaf](9)at(8.00,-12.00){};
        \node[Node](1)at(2.00,-3.00){};
        \node[Node](10)at(9.00,-9.00){};
        \node[Node](13)at(11.00,-3.00){};
        \node[Node](3)at(2.00,-6.00){};
        \node[Node](6)at(5.00,0.00){};
        \node[Node](8)at(7.00,-6.00){};
        \draw[Edge](0)edge[]node[EdgeLabel]{\begin{math}1\end{math}}(1);
        \draw[Edge](1)edge[]node[EdgeLabel]{\begin{math}2\end{math}}(6);
        \draw[Edge](10)edge[]node[EdgeLabel]{\begin{math}2\end{math}}(8);
        \draw[Edge](11)edge[]node[EdgeLabel]{\begin{math}1\end{math}}(10);
        \draw[Edge](12)edge[]node[EdgeLabel]{\begin{math}0\end{math}}(10);
        \draw[Edge](13)edge[]node[EdgeLabel]{\begin{math}1\end{math}}(6);
        \draw[Edge](14)edge[]node[EdgeLabel]{\begin{math}0\end{math}}(13);
        \draw[Edge](2)edge[]node[EdgeLabel]{\begin{math}4\end{math}}(3);
        \draw[Edge](3)edge[]node[EdgeLabel]{\begin{math}1\end{math}}(1);
        \draw[Edge](4)edge[]node[EdgeLabel]{\begin{math}2\end{math}}(3);
        \draw[Edge](5)edge[]node[EdgeLabel]{\begin{math}0\end{math}}(1);
        \draw[Edge](7)edge[]node[EdgeLabel]{\begin{math}3\end{math}}(8);
        \draw[Edge](8)edge[]node[EdgeLabel]{\begin{math}3\end{math}}(13);
        \draw[Edge](9)edge[]node[EdgeLabel]{\begin{math}0\end{math}}(10);
        \node(r)at(5.00,3){};
        \draw[Edge](r)edge[]node[EdgeLabel]{\begin{math}1\end{math}}(6);
    \end{tikzpicture}
    \end{split}\,.
\end{equation}

By seeing the elements of $\NC\Mca$ as $\Mca$-dual trees, we can
rephrase the partial composition of this operad as follows. If $\Sfr$
and $\Tfr$ are two $\Mca$-dual trees and $i$ is a valid integer, the
tree $\Sfr \circ_i \Tfr$ is computed by grafting the root of $\Tfr$ to
the $i$-th leaf of $\Sfr$. Then, by denoting by $b$ the label of the
edge adjacent to the root of $\Tfr$ and by $a$ the label of the edge
adjacent to the $i$-th leaf of $\Sfr$, we have two cases to consider,
depending on the value of $c := a \Op b$. If $c \ne \Unit_\Mca$, we
label the edge connecting $\Sfr$ and $\Tfr$ by $c$. Otherwise, when
$c = \Unit_\Mca$, we contract the edge connecting $\Sfr$ and $\Tfr$ by
merging the root of $\Tfr$ and the father of the $i$-th leaf of $\Sfr$.
For instance, in $\NC\N_3$, we have
\vspace{-1em}
\begin{equation}
    \begin{split}
    \begin{tikzpicture}[xscale=.25,yscale=.2]
        \node[Leaf](0)at(0.00,-5.33){};
        \node[Leaf](2)at(2.00,-5.33){};
        \node[Leaf](4)at(3.00,-5.33){};
        \node[Leaf](6)at(5.00,-5.33){};
        \node[Leaf](7)at(6.00,-2.67){};
        \node[Node](1)at(1.00,-2.67){};
        \node[Node](3)at(4.00,0.00){};
        \node[Node](5)at(4.00,-2.67){};
        \draw[Edge](0)edge[]node[EdgeLabel]{\begin{math}0\end{math}}(1);
        \draw[Edge](1)edge[]node[EdgeLabel]{\begin{math}1\end{math}}(3);
        \draw[Edge](2)edge[]node[EdgeLabel]{\begin{math}1\end{math}}(1);
        \draw[Edge](4)edge[]node[EdgeLabel]{\begin{math}2\end{math}}(5);
        \draw[Edge](5)edge[]node[EdgeLabel]{\begin{math}1\end{math}}(3);
        \draw[Edge](6)edge[]node[EdgeLabel]{\begin{math}0\end{math}}(5);
        \draw[Edge](7)edge[]node[EdgeLabel]{\begin{math}0\end{math}}(3);
        \node(r)at(4.00,2.75){};
        \draw[Edge](r)edge[]node[EdgeLabel]{\begin{math}2\end{math}}(3);
    \end{tikzpicture}
    \end{split}
    \circ_2
    \begin{split}
    \begin{tikzpicture}[xscale=.25,yscale=.3]
        \node[Leaf](0)at(0.00,-1.67){};
        \node[Leaf](2)at(2.00,-3.33){};
        \node[Leaf](4)at(4.00,-3.33){};
        \node[Node](1)at(1.00,0.00){};
        \node[Node](3)at(3.00,-1.67){};
        \draw[Edge](0)edge[]node[EdgeLabel]{\begin{math}0\end{math}}(1);
        \draw[Edge](2)edge[]node[EdgeLabel]{\begin{math}0\end{math}}(3);
        \draw[Edge](3)edge[]node[EdgeLabel]{\begin{math}1\end{math}}(1);
        \draw[Edge](4)edge[]node[EdgeLabel]{\begin{math}2\end{math}}(3);
        \node(r)at(1.00,2){};
        \draw[Edge](r)edge[]node[EdgeLabel]{\begin{math}1\end{math}}(1);
    \end{tikzpicture}
    \end{split}
    =
    \begin{split}
    \begin{tikzpicture}[xscale=.25,yscale=.2]
        \node[Leaf](0)at(0.00,-4.80){};
        \node[Leaf](10)at(9.00,-4.80){};
        \node[Leaf](11)at(10.00,-2.40){};
        \node[Leaf](2)at(2.00,-7.20){};
        \node[Leaf](4)at(4.00,-9.60){};
        \node[Leaf](6)at(6.00,-9.60){};
        \node[Leaf](8)at(7.00,-4.80){};
        \node[Node](1)at(1.00,-2.40){};
        \node[Node](3)at(3.00,-4.80){};
        \node[Node](5)at(5.00,-7.20){};
        \node[Node](7)at(8.00,0.00){};
        \node[Node](9)at(8.00,-2.40){};
        \draw[Edge](0)edge[]node[EdgeLabel]{\begin{math}0\end{math}}(1);
        \draw[Edge](1)edge[]node[EdgeLabel]{\begin{math}1\end{math}}(7);
        \draw[Edge](10)edge[]node[EdgeLabel]{\begin{math}0\end{math}}(9);
        \draw[Edge](11)edge[]node[EdgeLabel]{\begin{math}0\end{math}}(7);
        \draw[Edge](2)edge[]node[EdgeLabel]{\begin{math}0\end{math}}(3);
        \draw[Edge](3)edge[]node[EdgeLabel]{\begin{math}2\end{math}}(1);
        \draw[Edge](4)edge[]node[EdgeLabel]{\begin{math}0\end{math}}(5);
        \draw[Edge](5)edge[]node[EdgeLabel]{\begin{math}1\end{math}}(3);
        \draw[Edge](6)edge[]node[EdgeLabel]{\begin{math}2\end{math}}(5);
        \draw[Edge](8)edge[]node[EdgeLabel]{\begin{math}2\end{math}}(9);
        \draw[Edge](9)edge[]node[EdgeLabel]{\begin{math}1\end{math}}(7);
        \node(r)at(8.00,2.5){};
        \draw[Edge](r)edge[]node[EdgeLabel]{\begin{math}2\end{math}}(7);
    \end{tikzpicture}
    \end{split}\,,
    \qquad
    \begin{split}
    \begin{tikzpicture}[xscale=.25,yscale=.2]
        \node[Leaf](0)at(0.00,-5.33){};
        \node[Leaf](2)at(2.00,-5.33){};
        \node[Leaf](4)at(3.00,-5.33){};
        \node[Leaf](6)at(5.00,-5.33){};
        \node[Leaf](7)at(6.00,-2.67){};
        \node[Node](1)at(1.00,-2.67){};
        \node[Node](3)at(4.00,0.00){};
        \node[Node](5)at(4.00,-2.67){};
        \draw[Edge](0)edge[]node[EdgeLabel]{\begin{math}0\end{math}}(1);
        \draw[Edge](1)edge[]node[EdgeLabel]{\begin{math}1\end{math}}(3);
        \draw[Edge](2)edge[]node[EdgeLabel]{\begin{math}1\end{math}}(1);
        \draw[Edge](4)edge[]node[EdgeLabel]{\begin{math}2\end{math}}(5);
        \draw[Edge](5)edge[]node[EdgeLabel]{\begin{math}1\end{math}}(3);
        \draw[Edge](6)edge[]node[EdgeLabel]{\begin{math}0\end{math}}(5);
        \draw[Edge](7)edge[]node[EdgeLabel]{\begin{math}0\end{math}}(3);
        \node(r)at(4.00,2.75){};
        \draw[Edge](r)edge[]node[EdgeLabel]{\begin{math}2\end{math}}(3);
    \end{tikzpicture}
    \end{split}
    \circ_3
    \begin{split}
    \begin{tikzpicture}[xscale=.25,yscale=.3]
        \node[Leaf](0)at(0.00,-1.67){};
        \node[Leaf](2)at(2.00,-3.33){};
        \node[Leaf](4)at(4.00,-3.33){};
        \node[Node](1)at(1.00,0.00){};
        \node[Node](3)at(3.00,-1.67){};
        \draw[Edge](0)edge[]node[EdgeLabel]{\begin{math}0\end{math}}(1);
        \draw[Edge](2)edge[]node[EdgeLabel]{\begin{math}0\end{math}}(3);
        \draw[Edge](3)edge[]node[EdgeLabel]{\begin{math}1\end{math}}(1);
        \draw[Edge](4)edge[]node[EdgeLabel]{\begin{math}2\end{math}}(3);
        \node(r)at(1.00,2){};
        \draw[Edge](r)edge[]node[EdgeLabel]{\begin{math}1\end{math}}(1);
    \end{tikzpicture}
    \end{split}
    =
    \begin{split}
    \begin{tikzpicture}[xscale=.25,yscale=.2]
        \node[Leaf](0)at(0.00,-5.50){};
        \node[Leaf](10)at(8.00,-2.75){};
        \node[Leaf](2)at(2.00,-5.50){};
        \node[Leaf](4)at(3.00,-5.50){};
        \node[Leaf](6)at(4.00,-8.25){};
        \node[Leaf](8)at(6.00,-8.25){};
        \node[Leaf](9)at(7.00,-5.50){};
        \node[Node](1)at(1.00,-2.75){};
        \node[Node](3)at(5.00,0.00){};
        \node[Node](5)at(5.00,-2.75){};
        \node[Node](7)at(5.00,-5.50){};
        \draw[Edge](0)edge[]node[EdgeLabel]{\begin{math}0\end{math}}(1);
        \draw[Edge](1)edge[]node[EdgeLabel]{\begin{math}1\end{math}}(3);
        \draw[Edge](10)edge[]node[EdgeLabel]{\begin{math}0\end{math}}(3);
        \draw[Edge](2)edge[]node[EdgeLabel]{\begin{math}1\end{math}}(1);
        \draw[Edge](4)edge[]node[EdgeLabel]{\begin{math}0\end{math}}(5);
        \draw[Edge](5)edge[]node[EdgeLabel]{\begin{math}1\end{math}}(3);
        \draw[Edge](6)edge[]node[EdgeLabel]{\begin{math}0\end{math}}(7);
        \draw[Edge](7)edge[]node[EdgeLabel]{\begin{math}1\end{math}}(5);
        \draw[Edge](8)edge[]node[EdgeLabel]{\begin{math}2\end{math}}(7);
        \draw[Edge](9)edge[]node[EdgeLabel]{\begin{math}0\end{math}}(5);
        \node(r)at(5.00,2.5){};
        \draw[Edge](r)edge[]node[EdgeLabel]{\begin{math}2\end{math}}(3);
    \end{tikzpicture}
    \end{split}\,.
\end{equation}

Let $\Triangles_\Mca$ be the set of all $\Mca$-cliques of arity $2$. We
call such cliques {\em $\Mca$-triangles}.

\begin{Proposition} \label{prop:generating_set_Cro0_M}
    Let $\Mca$ be a unitary magma. The set $\Triangles_\Mca$ is the
    unique minimal generating set of~$\NC\Mca$.
\end{Proposition}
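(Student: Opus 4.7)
The plan is to prove the statement in two steps: establishing that $\Triangles_\Mca$ generates $\NC\Mca$, and then showing that every generating set of $\NC\Mca$ necessarily contains $\Triangles_\Mca$. Together these yield that $\Triangles_\Mca$ is a minimal generating set and is the unique such set. As a preliminary observation, since $\NC\Mca$ is a suboperad of $\Cli\Mca$ and any $\Mca$-noncrossing configuration of size $\geq 3$ has diagonals but by definition no solid diagonal crossing them, the intersection of $\Gen_{\Cli\Mca}$ (from Proposition~\ref{prop:generating_set_Cli_M}) with $\NC\Mca$ consists exactly of the $\Mca$-triangles; this already identifies $\Triangles_\Mca$ as the correct candidate.

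For the first step, I would induct on the size $n$ of $\Pfr \in \NC\Mca(n)$. The base $n = 2$ is immediate. For $n \geq 3$, I distinguish two subcases. If $\Pfr$ has a solid diagonal $(x, y)$ with label $c$, this diagonal partitions the solid arcs of $\Pfr$ into two disjoint parts by the noncrossing property. Let $\Pfr'$ be the $\Mca$-noncrossing configuration on $\{1, \dots, x\} \cup \{y, \dots, n+1\}$ (renumbered from $1$) with labels inherited from $\Pfr$ except $(\Pfr')_x := c$, and let $\Pfr''$ be the one on $\{x, \dots, y\}$ with labels inherited except $(\Pfr'')_0 := \Unit_\Mca$. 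Then $\Pfr = \Pfr' \circ_x \Pfr''$, since $c \Op \Unit_\Mca = c$ restores the label of the diagonal $(x, y)$, while the straddling arcs of the composite, which are $\Unit_\Mca$ by default, match the already non-solid diagonals of $\Pfr$ that would cross $(x, y)$. If $\Pfr$ has no solid diagonal, I split instead along the arc $(1, 3)$: let $\Pfr'$ be the bubble on $\{1, 3, \dots, n+1\}$ of size $n - 1$ with $(\Pfr')_1 := \Unit_\Mca$ and $\Pfr''$ be the triangle on $\{1, 2, 3\}$ with $(\Pfr'')_0 := \Unit_\Mca$; the identity $\Unit_\Mca \Op \Unit_\Mca = \Unit_\Mca$ then recovers the non-solid status of $(1, 3)$ in $\Pfr$. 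In both cases, $\Pfr', \Pfr'' \in \NC\Mca$ have sizes strictly less than $n$, so the induction concludes.

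For the second step, every triangle has arity $2$, and a partial composition $\Pfr \circ_i \Qfr$ with both $\Pfr$ and $\Qfr$ different from $\UnitClique$ has arity at least $3$. Consequently a triangle cannot be obtained as a nontrivial composition, so each triangle is indecomposable and must belong to every generating set of $\NC\Mca$. Combined with the first step, this yields the unique minimal generating set. The main subtle point is the bubble case of the inductive argument, where all diagonals bear $\Unit_\Mca$; there the decomposition relies essentially on the identity $\Unit_\Mca \Op \Unit_\Mca = \Unit_\Mca$ to keep the newly formed gluing arc non-solid, without which the composition would introduce a spurious solid diagonal and fall outside~$\NC\Mca$.
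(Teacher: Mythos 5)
Your proof is correct, and it follows the route the paper implicitly intends (the proposition is stated without proof, but the preceding discussion of $\Mca$-dual trees makes clear that the intended argument is exactly your decomposition: splitting an $\Mca$-noncrossing configuration along a solid diagonal, with the bubble case handled separately, is the clique-side version of cutting a Schröder tree at an internal edge). Your arity-$2$ argument for minimality and uniqueness is the standard one and matches the convention used for $\Gen_{\Cli\Mca}$ in Proposition~\ref{prop:generating_set_Cli_M}.
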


\begin{Proposition} \label{prop:Hilbert_series_Cro0_M}
    Let $\Mca$ be a finite unitary magma and $m$ be its cardinality.
    The Hilbert series
    $\Hilbert_{\NC\Mca}(t)$ of $\NC\Mca$ satisfies
    \begin{equation}
        t + \left(m^3 - 2m^2 + 2m - 1\right)t^2
        + \left(2m^2t - 3mt + 2t - 1\right) \Hilbert_{\NC\Mca}(t)
        + \left(m - 1\right) \Hilbert_{\NC\Mca}(t)^2
        = 0.
    \end{equation}
\end{Proposition}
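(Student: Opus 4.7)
The plan is to derive a functional equation for $H := \Hilbert_{\NC\Mca}(t)$ by decomposing every $\Mca$-noncrossing configuration at the polygonal face incident to its base. Let $B(t)$ denote the generating series of $\Mca$-noncrossing configurations of size at least $2$ whose base carries a label in $\Mca \setminus \{\Unit_\Mca\}$. Since the label of the base of any configuration of size $n \geq 2$ may be chosen freely among the $m$ elements of $\Mca$ independently of all other arcs, one has
\begin{equation}
    B(t) = \frac{m-1}{m}(H - t).
\end{equation}
I would then introduce the auxiliary series $G(t) := m t + B(t)$, whose coefficient of $t^\ell$ records the two possible ``sides'' that can occur along the face adjacent to the base: an ordinary polygon edge of length $1$ carrying any of the $m$ labels (contribution $mt$), or a solid diagonal of length $\ell \geq 2$ that closes off a sub-$\Mca$-noncrossing configuration of size $\ell$ (contribution from $B$).

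The key combinatorial step is the decomposition itself. For any $\Mca$-noncrossing configuration $\Pfr$ of size $n \geq 2$, the unique face of $\Pfr$ adjacent to the base $(1, n+1)$ has a vertex sequence $1 = v_0 < v_1 < \cdots < v_k = n + 1$ with $k \geq 2$; its $k$ sides are exactly encoded by $G$, and the base label of $\Pfr$ itself contributes a free factor of $m$. Summing over $k \geq 2$ and adding the arity-$1$ unit, one obtains
\begin{equation}
    H = t + m \sum_{k \geq 2} G(t)^k = t + \frac{m\, G(t)^2}{1 - G(t)}.
\end{equation}

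The rest is purely algebraic elimination. Setting $X := H - t$, the relation above rewrites as $X(1 - G) = m G^2$, equivalently $X = G(X + m G)$. A short computation using $X + m G = m(X + m t)$ and $m G = m^2 t + (m - 1) X$ yields
\begin{equation}
    X = (m^2 t + (m-1) X)(m t + X),
\end{equation}
and expanding and replacing $X$ by $H - t$ produces exactly the stated quadratic equation. The only conceptual point requiring care, and therefore the main (modest) obstacle, is verifying that the series $G$ correctly amalgamates the two types of sides, which rests on the fact that solid diagonals and bases of sub-configurations obey the same labelling constraint, namely values in $\Mca \setminus \{\Unit_\Mca\}$. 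Once this identification is fixed, the decomposition becomes a bijection and the subsequent manipulation is routine.
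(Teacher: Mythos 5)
Your argument is correct: the decomposition at the face adjacent to the base is precisely the root decomposition of the $\Mca$-dual trees the paper introduces just before this proposition (polygon edges are the leaf edges labeled on $\Mca$, solid diagonals are the internal edges labeled on $\Mca \setminus \{\Unit_\Mca\}$, whence the factor $\tfrac{m-1}{m}(H-t)$), so you are following essentially the paper's intended route even though it prints no proof. I have checked the elimination: $X = \left(m^2 t + (m-1)X\right)\left(mt + X\right)$ with $X = H - t$ expands exactly to the stated quadratic, and the series $t + mG^2/(1-G)$ reproduces $1, 8, 48, 352, \dots$ for $m = 2$.
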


From this result, together with classical arguments involving
Narayana numbers, we obtain that for all $n \geq 2$,
\vspace{-.5em}
\begin{equation}
    \dim \NC\Mca(n) =
    \sum_{0 \leq k \leq n - 2}
        m^{n + k + 1} (m - 1)^{n - k - 2} \:
        \frac{1}{k + 1} \binom{n - 2}{k} \binom{n - 1}{k}.
\end{equation}
For instance, when $\# \Mca = 2$, the dimensions of $\NC\Mca$ begin by
$1$, $8$, $48$, $352$, $2880$ (Sequence~\OEIS{A054726}, except for the
first terms).

%%%%%%%%%%%%%%%%%%%%%%%%%%%%%%%%%%%%%%%%%%%%%%%%%%%%%%%%%%%%%%%%%%%%%%%%
%%%%%%%%%%%%%%%%%%%%%%%%%%%%%%%%%%%%%%%%%%%%%%%%%%%%%%%%%%%%%%%%%%%%%%%%
\subsection{Presentation, Koszulity, and Koszul dual}
In what follows, $\Mca$-triangles
\begin{math}
    \Pfr =
    \begin{tikzpicture}[scale=0.4,baseline=.00em]
        \node[CliquePoint](1)at(-0.87,-0.50){};
        \node[CliquePoint](2)at(-0.00,1.00){};
        \node[CliquePoint](3)at(0.87,-0.50){};
        \draw[CliqueEdge](1)edge[]node[CliqueLabel]
             {\begin{math}\Pfr_1\end{math}}(3);
        \draw[CliqueEdge](2)edge[]node[CliqueLabel]
            {\begin{math}\Pfr_3\end{math}}(3);
        \draw[CliqueEdge](1)edge[]node[CliqueLabel]
            {\begin{math}\Pfr_2\end{math}}(2);
    \end{tikzpicture}
\end{math}
are denoted by words $\Pfr_1\Pfr_2\Pfr_3 \in \Mca^3$.

\begin{Theorem} \label{thm:presentation_Cro0_M}
    Let $\Mca$ be a finite unitary magma. The operad $\NC\Mca$ is
    binary, quadratic, Koszul, and admits the presentation
    $\NC\Mca \simeq \Free(\Triangles_\Mca)/_{\langle\Rel\rangle}$, where
    $\Rel$ is the space of relations generated by
    \begin{subequations}
    \begin{equation}
        \Pfr_1\Pfr_2\Pfr_3
        \circ_1
        \Qfr_1\Qfr_2\Qfr_3
        -
        \Pfr_1\Rfr_2\Pfr_3
        \circ_1
        \Rfr_1\Qfr_2\Qfr_3,
        \qquad
        \mbox{if } \Pfr_2 \Op \Qfr_1 = \Rfr_2 \Op \Rfr_1 \ne \Unit_\Mca,
    \end{equation}
    \vspace{-1em}
    \begin{equation}
        \Pfr_1\Pfr_2\Pfr_3
        \circ_1
        \Qfr_1\Qfr_2\Qfr_3
        -
        \Pfr_1\Qfr_2\Rfr_3
        \circ_2
        \Rfr_1\Qfr_3\Pfr_3,
        \qquad
        \mbox{if } \Pfr_2 \Op \Qfr_1 = \Rfr_3 \Op \Rfr_1 = \Unit_\Mca,
    \end{equation}
    \vspace{-1em}
    \begin{equation}
        \Pfr_1\Pfr_2\Pfr_3
        \circ_2
        \Qfr_1\Qfr_2\Qfr_3
        -
        \Pfr_1\Pfr_2\Rfr_3
        \circ_2
        \Rfr_1\Qfr_2\Qfr_3,
        \qquad
        \mbox{if } \Pfr_3 \Op \Qfr_1 = \Rfr_3 \Op \Rfr_1 \ne \Unit_\Mca,
    \end{equation}
    \end{subequations}
    where
    \begin{math}
        \Pfr_1, \Pfr_2, \Pfr_3, \Qfr_1, \Qfr_2, \Qfr_3,
        \Rfr_1, \Rfr_2, \Rfr_3 \in \Mca
    \end{math}.
\end{Theorem}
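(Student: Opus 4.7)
The plan is to establish the four claims --- binary, quadratic, Koszul, and the explicit presentation --- in three stages: verify the relations hold in $\NC\Mca$, show they generate the entire ideal of relations, and finally certify Koszulity via a convergent rewriting system.

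First, since by Proposition~\ref{prop:generating_set_Cro0_M} the set $\Triangles_\Mca$ is a minimal generating set, the operad is binary, and the surjection $\Free(\Triangles_\Mca) \twoheadrightarrow \NC\Mca$ is established. Checking that the three families of relations lie in its kernel is a direct computation on $\Mca$-noncrossing configurations of size $3$: for (a) and (c), both compositions yield the same clique sharing the same edge and base labels, while its unique solid diagonal carries the label $\Pfr_2 \Op \Qfr_1$ (resp.\ $\Pfr_3 \Op \Qfr_1$), which matches $\Rfr_2 \Op \Rfr_1$ (resp.\ $\Rfr_3 \Op \Rfr_1$) by hypothesis; for (b), the hypotheses $\Pfr_2 \Op \Qfr_1 = \Unit_\Mca$ and $\Rfr_3 \Op \Rfr_1 = \Unit_\Mca$ collapse the corresponding diagonals on both sides, so each composition produces the same $\Mca$-bubble, with base $\Pfr_1$ and edges $\Qfr_2, \Qfr_3, \Pfr_3$.

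Next, to show that no further relations are needed, I would exhibit a canonical form for elements of the quadratic operad $\Oca := \Free(\Triangles_\Mca) / \langle \Rel \rangle$ and identify it with the $\Mca$-dual tree description of $\NC\Mca$ developed at the beginning of this section. Relations (a) and (c) propagate the combined diagonal label through the ambiguity of a product $a \Op b \ne \Unit_\Mca$, while relation (b) rewrites a position-$1$ composition that degenerates (i.e.\ produces a unit diagonal, forcing a contraction of its dual edge) into a unique position-$2$ normal shape. Iterating, every syntactic tree over $\Triangles_\Mca$ reduces to the one encoded by the $\Mca$-dual tree of the $\Mca$-noncrossing configuration it composes to, giving $\dim \Oca(n) \leq \dim \NC\Mca(n)$. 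Combined with the surjection $\Oca \twoheadrightarrow \NC\Mca$, this forces equality in every arity, proving the presentation and that the operad is quadratic.

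For Koszulity, I would upgrade the reduction procedure of the previous step into a convergent quadratic rewriting system, orienting the three relation families so that each rule sends a non-canonical syntactic composition to its canonical-form right-hand side. Termination is guaranteed by a path-length order on syntactic trees refining this orientation. Confluence amounts to checking that all arity-$4$ critical pairs join; geometrically, these correspond to composing three triangles in two different orders into the same size-$4$ $\Mca$-noncrossing configuration, and each such pair converges to the canonical dual-tree normal form of that configuration. By the standard criterion for Koszulity via convergent quadratic rewriting systems, this implies that $\NC\Mca$ is Koszul. The main obstacle is the critical-pair analysis in arity $4$: each of the three rule families splits into subcases depending on whether several $\Op$-products in $\Mca$ equal $\Unit_\Mca$, and the overlaps must be resolved across every combination. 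The analysis stays tractable because the $\Mca$-dual tree description provides a coordinate-free target to which every reduction sequence must converge.
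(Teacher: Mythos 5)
Your proposal is correct and follows essentially the same route as the paper: the paper's proof also constructs a rewrite rule on syntax trees of $\Mca$-triangles from the relations $\Rel$, shows it is convergent, counts normal forms against the dimensions of $\NC\Mca$ to establish the presentation, and deduces Koszulity from the existence of the convergent quadratic rewriting system. Your added details (the direct arity-$3$ verification of the relations and the identification of normal forms with $\Mca$-dual trees) are consistent with, and fill in, the paper's sketch.
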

\begin{proof}
    The proof is long, technical, but classical and uses techniques from
    rewriting theory~\cite{BN98}. It consists in defining a rewrite rule
    $\to$ from $\Rel$ on syntax trees of $\Mca$-triangles, by showing
    that $\to$ is convergent, and prove that $\to$ admits as many normal
    forms as basis elements of $\NC\Mca$ of arity $n$ for all~$n \geq 1$.
    The fact that $\NC\Mca$ is Koszul is a consequence of the existence
    of such a rewrite rule $\to$ (see~\cite{DK10,Hof10}).
\end{proof}

We can now compute a presentation of the Koszul dual $\NC\Mca^!$ of
$\NC\Mca$ from the presentation of $\NC\Mca$ provided by
Theorem~\ref{thm:presentation_Cro0_M}.

\begin{Proposition} \label{prop:presentation_Cro0_M_dual}
    Let $\Mca$ be a finite unitary magma. The operad $\NC\Mca^!$ admits
    the presentation
    \begin{math}
        \NC\Mca^! \simeq
        \Free(\Triangles_\Mca)/_{\left\langle\Rel^\perp\right\rangle},
    \end{math}
    where $\Rel^\perp$ is the space of relations generated by
    \begin{subequations}
    \begin{equation}\begin{split}\end{split}
        \sum_{\substack{
            \Pfr_2, \Qfr_1 \in \Mca,
            \Pfr_2 \Op \Qfr_1 = \delta
        }}
        \enspace
        \Pfr_1\Pfr_2\Pfr_3 \circ_1 \Qfr_1\Qfr_2\Qfr_3,
        \qquad
        \mbox{where } \Pfr_1, \Pfr_3, \Qfr_2, \Qfr_3 \in \Mca, \delta
        \in \Mca \setminus \{\Unit_\Mca\},
    \end{equation}
    \vspace{-1em}
    \begin{equation}\begin{split}\end{split}
        \sum_{\substack{
            \Pfr_2, \Qfr_1 \in \Mca,
            \Pfr_2 \Op \Qfr_1 = \Unit_\Mca
        }}
        \enspace
        \Pfr_1\Pfr_2\Pfr_3 \circ_1 \Qfr_1\Qfr_2\Qfr_3
        -
        \Pfr_1\Qfr_2\Pfr_2 \circ_2 \Qfr_1\Qfr_3\Pfr_3,
        \qquad
        \mbox{where } \Pfr_1, \Pfr_3, \Qfr_2, \Qfr_3 \in \Mca,
    \end{equation}
    \vspace{-1em}
    \begin{equation}\begin{split}\end{split}
        \sum_{\substack{
            \Pfr_3, \Qfr_1 \in \Mca,
            \Pfr_3 \Op \Qfr_1 = \delta
        }}
        \enspace
        \Pfr_1\Pfr_2\Pfr_3 \circ_2 \Qfr_1\Qfr_2\Qfr_3,
        \qquad
        \mbox{where } \Pfr_1, \Pfr_2, \Qfr_2, \Qfr_3 \in \Mca, \delta
        \in \Mca \setminus \{\Unit_\Mca\}.
    \end{equation}
    \end{subequations}
\end{Proposition}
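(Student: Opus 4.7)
The plan is to apply the standard recipe for the Koszul dual of a binary quadratic nonsymmetric operad to the presentation of $\NC\Mca$ provided by Theorem~\ref{thm:presentation_Cro0_M}. Writing $\NC\Mca \simeq \Free(\Triangles_\Mca)/\langle\Rel\rangle$, one has $\NC\Mca^! \simeq \Free(\Triangles_\Mca^*)/\langle\Rel^\perp\rangle$, where $\Rel^\perp \subseteq \Free(\Triangles_\Mca^*)(3)$ is the annihilator of $\Rel$ for the canonical bilinear pairing on arity-three tree monomials. Identifying $\Triangles_\Mca^*$ with $\Triangles_\Mca$ via the natural dual basis, this pairing reads
\begin{equation*}
    \langle \Pfr \circ_i \Qfr,\ \Pfr' \circ_j \Qfr' \rangle
    = (-1)^{i-1}\, \delta_{i,j}\, \delta_{\Pfr,\Pfr'}\, \delta_{\Qfr,\Qfr'}.
\end{equation*}

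First I would decompose the basis of $\Free(\Triangles_\Mca)(3)$ into blocks indexed by the boundary data invariant under a partial composition: to a $\circ_1$-monomial $\Pfr_1\Pfr_2\Pfr_3 \circ_1 \Qfr_1\Qfr_2\Qfr_3$ I attach the quadruple $(\Pfr_1,\Pfr_3,\Qfr_2,\Qfr_3)$ and the value $c := \Pfr_2 \Op \Qfr_1 \in \Mca$, and symmetrically on the $\circ_2$-side. Inspection of Theorem~\ref{thm:presentation_Cro0_M} shows that every defining generator of $\Rel$ lies in a single $\circ_1$- or $\circ_2$-block (types (a) and (c), occurring exactly when $c \ne \Unit_\Mca$), or pairs a $\circ_1$-block with a $\circ_2$-block sharing the same quadruple and having $c = \Unit_\Mca$ (type (b)). Consequently $\Rel$, and hence $\Rel^\perp$, splits as the orthogonal direct sum of its restrictions to these blocks.

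Next I would compute the orthogonal complement block by block. For a type (a) $\circ_1$-block with parameter $\delta \ne \Unit_\Mca$, the relations form the hyperplane of vectors whose coordinates sum to zero, so its orthogonal is one-dimensional and generated by the uniform sum appearing in family (a'). Family (c') follows identically from the type (c) $\circ_2$-blocks, the overall $(-1)$ coming from the pairing on $\circ_2$-monomials merely rescaling the spanning vector. For a mixed type (b) block, relations (b) identify all $N := \#\{(x,y) \in \Mca^2 : x \Op y = \Unit_\Mca\}$ vectors on the $\circ_1$-side with all $N$ vectors on the $\circ_2$-side, forming a codimension-one subspace of the $2N$-dimensional block; a direct pairing check using the sign $(-1)^{i-1}$ shows that the orthogonal line is spanned by the difference between the sum of all $\circ_1$-vectors and the sum of all $\circ_2$-vectors in the block, which is exactly family (b').

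The main subtlety is bookkeeping in the type (b) case: the indices $(\Pfr_2,\Qfr_1)$ on the $\circ_1$-side must be matched with $(\Rfr_3,\Rfr_1)$ on the $\circ_2$-side via the substitution $\Rfr_3 := \Pfr_2$, $\Rfr_1 := \Qfr_1$ that is implicit in the statement of family (b'), and it is the sign $(-1)^{i-1}$ in the pairing that converts the primal two-term differences into the dual sum-of-differences. Once this matching is verified, a dimension count summing the one-dimensional orthogonal contributions over all blocks confirms that families (a'), (b'), (c') exhaust $\Rel^\perp$, completing the computation of the presentation of~$\NC\Mca^!$.
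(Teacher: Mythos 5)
Your computation is correct and follows exactly the route the paper intends: the paper gives no explicit proof, stating only that the presentation of $\NC\Mca^!$ is computed from Theorem~\ref{thm:presentation_Cro0_M} by the standard annihilator construction for binary quadratic operads, which is precisely your block-by-block orthogonal-complement argument (including the correct handling of the sign on $\circ_2$-monomials and the index matching in the mixed blocks).
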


\begin{Proposition} \label{prop:Hilbert_series_Cro0_M_dual}
    Let $\Mca$ be a finite unitary magma and $m$ be its cardinality. The
    Hilbert series $\Hilbert_{\NC\Mca^!}(t)$ of $\NC\Mca^!$ satisfies
    \begin{equation}
        t + (m - 1)t^2
        + \left(2m^2t - 3mt + 2t -1\right)\Hilbert_{\NC\Mca^!}(t)
        + \left(m^3 - 2m^2 + 2m - 1\right)\Hilbert_{\NC\Mca^!}(t)^2 = 0.
    \end{equation}
    \noindent This is the generating series of all noncrossing
    configurations where all edges and bases are labeled by pairs
    $(a, a) \in \Mca^2$, and all solid diagonals are labeled by pairs
    $(a, b) \in \Mca^2$ where~$a \ne b$.
\end{Proposition}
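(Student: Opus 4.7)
The plan has two parts: derive the functional equation via Koszul duality applied to Proposition~\ref{prop:Hilbert_series_Cro0_M}, then verify that the series of weighted noncrossing configurations described in the statement satisfies the same equation. Since Theorem~\ref{thm:presentation_Cro0_M} establishes that $\NC\Mca$ is Koszul, the standard Hilbert-series identity for a nonsymmetric Koszul operad and its Koszul dual gives
\begin{equation}
    \Hilbert_{\NC\Mca^!}\bigl(-\Hilbert_{\NC\Mca}(-t)\bigr) = t.
\end{equation}
Writing $f := \Hilbert_{\NC\Mca}$, $g := \Hilbert_{\NC\Mca^!}$, and $h(t) := -f(-t)$, this says precisely that $g$ is the compositional inverse of $h$ in $t\K[[t]]$.

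I would carry out the algebraic step as follows. Substituting $t \mapsto -t$ in the equation of Proposition~\ref{prop:Hilbert_series_Cro0_M} and replacing $f(-t)$ by $-h(t)$ yields
\begin{equation}
    -t + (m^3 - 2m^2 + 2m - 1)\, t^2 + \bigl((2m^2 - 3m + 2)\, t + 1\bigr)\, h(t) + (m - 1)\, h(t)^2 = 0.
\end{equation}
Next, substituting $t = g(u)$ and $h(t) = u$ (using $h \circ g = \mathrm{id}$) and re-collecting as a polynomial quadratic in $g(u)$ rather than in $u$ produces exactly the claimed equation for $g$: the coefficients $(m - 1)$ and $(m^3 - 2m^2 + 2m - 1)$ swap positions while the middle coefficient $(2m^2 - 3m + 2)$ is preserved. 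This step is routine algebra, with only careful sign tracking required.

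For the combinatorial interpretation, I would introduce the two-parameter series $N(t, \alpha, \beta)$ enumerating noncrossing configurations with each boundary arc (edge or base) weighted by $\alpha$ and each solid diagonal by $\beta$, and derive a single master functional equation satisfied by $N$ via a standard recursive decomposition of the associated Schröder (dual) trees. The consistency check is that the specialization $(\alpha, \beta) = (m, m-1)$ recovers the equation of Proposition~\ref{prop:Hilbert_series_Cro0_M}. Specializing instead to $(\alpha, \beta) = (m, m(m-1))$ matches the weighting from the statement, namely $m$ pairs $(a,a)$ per boundary arc and $m^2 - m$ pairs $(a,b)$ with $a \ne b$ per solid diagonal, and a direct computation then shows that the resulting equation coincides with the one obtained above for $g$; uniqueness of the power-series solution of this quadratic (the constant coefficient of $g$ at $t = 0$ is $-1$, hence nonzero) identifies the two series. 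The main obstacle is this last compatibility check: the key algebraic identity $(m - 1)(m^2 - m + 1) = m^3 - 2m^2 + 2m - 1$ explains why the weight shift $\beta \mapsto m\beta$ implements precisely the Koszul exchange of the outer coefficients in the functional equation.
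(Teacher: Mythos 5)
Your proposal is correct and follows the route the paper intends (the extended abstract omits the proof): Koszulity from Theorem~\ref{thm:presentation_Cro0_M} gives $\Hilbert_{\NC\Mca^!}(-\Hilbert_{\NC\Mca}(-t)) = t$, the compositional inversion swaps the outer coefficients of the quadratic from Proposition~\ref{prop:Hilbert_series_Cro0_M}, and the combinatorial description is identified by checking that the weighted series (boundary arcs weighted $m$, solid diagonals weighted $m^2-m$) satisfies the same equation, which has a unique solution in $t\K[[t]]$. I verified the specializations $(\alpha,\beta)=(m,m-1)$ and $(m,m(m-1))$ of your master equation do yield the two stated quadratics, so the compatibility check you flag as the main obstacle indeed goes through.
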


Proposition~\ref{prop:Hilbert_series_Cro0_M_dual} hence provides a
combinatorial description of the elements of $\NC\Mca^!$. For instance,
when $\# \Mca = 2$, the dimensions of $\NC\Mca^!$ begin by $1$, $8$,
$80$, $992$, $13760$ (Sequence~\OEIS{A234596}). It is worthwhile to
observe that the dimensions of $\NC\Mca^!$ in this case are the ones of
the operad~$\BNC$~\cite{CG14} (see
Section~\ref{subsec:constructing_existing_operads}).

%%%%%%%%%%%%%%%%%%%%%%%%%%%%%%%%%%%%%%%%%%%%%%%%%%%%%%%%%%%%%%%%%%%%%%%%
%%%%%%%%%%%%%%%%%%%%%%%%%%%%%%%%%%%%%%%%%%%%%%%%%%%%%%%%%%%%%%%%%%%%%%%%
%%%%%%%%%%%%%%%%%%%%%%%%%%%%%%%%%%%%%%%%%%%%%%%%%%%%%%%%%%%%%%%%%%%%%%%%
%\section*{Perspectives}
%We are first interested in providing a combinatorial realization of
%$\Cli\Mca^!$ by designing an algorithm computing the partial composition
%of two object described by
%Proposition~\ref{prop:Hilbert_series_Cro0_M_dual}. Moreover, a complete
%study of the suboperads and quotients of $\Cli\Mca$ is planned, hoping
%to compute their generators, presentations, and Koszul duals (for the
%binary and quadratic case). Finally, the translation of the partial
%composition of $\Invol\Dbb_0$ on standard Young tableaux, originally
%defined on involutions, offers several ways to compose such tableaux (in
%the $\Hsf$-basis or in the $\Ksf$-basis). We are interested to explore
%if this operation interacts with the Robinson-Schensted correspondence
%or have a meaning in terms of representation theory.

%%%%%%%%%%%%%%%%%%%%%%%%%%%%%%%%%%%%%%%%%%%%%%%%%%%%%%%%%%%%%%%%%%%%%%%%
%%%%%%%%%%%%%%%%%%%%%%%%%%%%%%%%%%%%%%%%%%%%%%%%%%%%%%%%%%%%%%%%%%%%%%%%
%%%%%%%%%%%%%%%%%%%%%%%%%%%%%%%%%%%%%%%%%%%%%%%%%%%%%%%%%%%%%%%%%%%%%%%%
\bibliographystyle{plain}
\bibliography{Bibliography}

\begin{thebibliography}{10}

\bibitem{BN98}
F.~Baader and T.~Nipkow.
\newblock {\em Term rewriting and all that}.
\newblock Cambridge University Press, Cambridge, 1998.

\bibitem{CCM11}
P.~Caron, J.-C. Champarnaud, and L.~Mignot.
\newblock {Multi-Bar and Multi-Tilde Regular Operators}.
\newblock {\em J. Autom. Lang. Comb.}, 16(1):11--26, 2011.

\bibitem{Cha08}
F.~Chapoton.
\newblock {Operads and algebraic combinatorics of trees}.
\newblock {\em S\'em. Lothar. Combin.}, 58, 2008.

\bibitem{CG14}
F.~Chapoton and S.~Giraudo.
\newblock Enveloping operads and bicoloured noncrossing configurations.
\newblock {\em Exp. Math.}, 23(3):332--349, 2014.

\bibitem{DK10}
V.~Dotsenko and A.~Khoroshkin.
\newblock {Gr\"obner bases for operads}.
\newblock {\em Duke Math. J.}, 153(2):363--396, 2010.

\bibitem{FN99}
P.~Flajolet and M.~Noy.
\newblock {Analytic combinatorics of non-crossing configurations}.
\newblock {\em Discrete Math.}, 204(1-3):203--229, 1999.

\bibitem{Gir16}
S.~Giraudo.
\newblock {Operads from posets and Koszul duality}.
\newblock {\em Eur. J. Combin.}, 56C:1--32, 2016.

\bibitem{GLMN16}
S.~Giraudo, J.-G. Luque, L.~Mignot, and F.~Nicart.
\newblock {Operads, quasiorders, and regular languages}.
\newblock {\em Adv. Appl. Math.}, 75:56--93, 2016.

\bibitem{Hof10}
E.~Hoffbeck.
\newblock {A Poincar\'e-Birkhoff-Witt criterion for Koszul operads}.
\newblock {\em Manuscripta Math.}, 131(1-2):87--110, 2010.

\bibitem{LV12}
J.-L. Loday and B.~Vallette.
\newblock {\em {Algebraic Operads}}, volume 346 of {\em Grundlehren der
  mathematischen Wissenschaften}.
\newblock Springer, 2012.

\bibitem{LMN13}
J.-G. Luque, L.~Mignot, and F.~Nicart.
\newblock {Some Combinatorial Operators in Language Theory}.
\newblock {\em J. Autom. Lang. Comb.}, 18(1):27--52, 2013.

\bibitem{Men15}
M.~A. M{\'e}ndez.
\newblock {\em {Set operads in combinatorics and computer science}}.
\newblock SpringerBriefs in Mathematics. Springer, Cham, 2015.

\bibitem{Slo}
N.~J.~A. Sloane.
\newblock {The On-Line Encyclopedia of Integer Sequences}.
\newblock \url{https://oeis.org/}.

\bibitem{Val07}
B.~Vallette.
\newblock {Homology of generalized partition posets}.
\newblock {\em J. Pure Appl. Algebra}, 208(2):699--725, 2007.

\end{thebibliography}

\end{document}